\newcommand{\norm}[1]{{\left\lVert #1 \right\rVert}}
\newcommand{\abs}[1]{{\lvert {#1} \rvert}}
\newcommand{\sabs}[1]{{\left\lvert {#1} \right\rvert}}
\renewcommand{\atop}[2]{\genfrac{}{}{0pt}2{#1}{#2}}
\newcommand{\CC}{\mathbb{C}}
\newcommand{\ZZ}{\mathbb{Z}}
\newcommand{\NN}{\mathbb{N}}
\newcommand{\RR}{\mathbb{R}}
\newcommand{\PP}{\mathbb{P}}
\newcommand{\pl}[1]{\foreignlanguage{polish}{#1}}
\newtheorem{theorem}{Theorem}
\newtheorem{proposition}{Proposition}[section]
\newtheorem{lemma}{Lemma}
\newtheorem{corollary}{Corollary}
\newtheorem*{theorem*}{Theorem}
\newcommand{\ind}[1]{{\mathds{1}_{{#1}}}}
\newcommand{\seq}[2]{\left({#1}: {#2}\right)}
\title[Cotlar's ergodic theorem]
{Cotlar's ergodic theorem \\ along the prime numbers}
\author{Mariusz Mirek}
\address{Mariusz Mirek \\
	Universit\"{a}t Bonn \\
	Mathematical Institute\\
	Endenicher Allee 60\\
	D--53115 Bonn \\
	Germany \&
	Instytut Matematyczny\\
	Uniwersytet \pl{Wroc{\lll}awski}\\
	Pl. Grun\-waldzki 2/4\\
	50-384 \pl{Wroc{\lll}aw}\\
	Poland}
 \email{mirek@math.uni-bonn.de}
\author{Bartosz Trojan}
\address{
	Bartosz Trojan\\
	Instytut Matematyczny\\
	Uniwersytet \pl{Wroc{\lll}awski}\\
	Pl. Grun\-waldzki 2/4\\
	50-384 \pl{Wroc{\lll}aw}\\
	Poland}
\email{trojan@math.uni.wroc.pl}
\thanks{
	The authors were supported by NCN grant DEC--2012/05/D/ST1/00053}
\begin{document}
\selectlanguage{english}

\begin{abstract}
The aim of this paper is to prove Cotlar's ergodic theorem modeled on the set of primes.
\end{abstract}

\maketitle

\section{Introduction}
Let $(X, \mathcal{B}, \mu, S)$ be a dynamical system on a measure space $X$ endowed with
a $\sigma$-algebra $\mathcal{B}$, a $\sigma$-finite measure $\mu$ and an invertible measure
preserving transformation $S:X \rightarrow X$. In 1955 Cotlar (see \cite{cot}) established the
almost everywhere convergence of the ergodic truncated Hilbert transform
\begin{align*}
	\lim_{N\to\infty}\sum_{1 \leq \abs{n} \leq N}\frac{f(S^nx)}{n}
\end{align*}
for all $f\in L^r(\mu)$ with $1\le r<\infty$. The aim of the present paper is to obtain the
corresponding result for the set of prime numbers $\PP$. Let $\PP_N = \PP \cap (1, N]$.
We prove
\begin{theorem}
	\label{thm:1}
	For a given dynamical system $(X, \mathcal{B}, \mu, S)$ the almost everywhere convergence of
	the ergodic truncated Hilbert transform along $\PP$
	\begin{align*}
		\lim_{N\to\infty} \sum_{p\in\pm\PP_N} \frac{f(S^p x)}{p}\log \abs{p}
	\end{align*}
	holds for all $f\in L^r(\mu)$ with $1< r<\infty$.
\end{theorem}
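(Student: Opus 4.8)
The plan is to follow the by-now standard route for pointwise ergodic theorems along sparse arithmetic sets. First, by Calder\'on's transference principle one reduces everything to the model given by $\ZZ$ with the shift: it suffices to prove there, for $H_N f(x)=\sum_{p\in\pm\PP_N}\frac{\log\abs p}{p}\,f(x-p)$, a $\varrho$-variational inequality $\norm{V^{\varrho}(H_N f:N\in\NN)}_{\ell^r(\ZZ)}\lesssim_{r,\varrho}\norm f_{\ell^r(\ZZ)}$ for all $1<r<\infty$ and some fixed $\varrho>2$ (this is the robust quantitative input: once transferred it forces a.e.\ convergence for every dynamical system at once, which is cleaner than chasing a dense subclass of convergence in a general system; the maximal inequality is of course subsumed). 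A routine preliminary step replaces the primes by the von Mangoldt function --- the contribution of proper prime powers to $\sum_{1\le\abs n\le N}\frac{\Lambda(\abs n)}{n}f(x-n)$ is absolutely summable --- so we may work with $\widetilde H_N f(x)=\sum_{1\le\abs n\le N}\frac{\Lambda(\abs n)}{n}f(x-n)$, which on the torus $\mathbb{T}=\RR/\ZZ$ has the Fourier multiplier $m_N(\xi)=2i\sum_{n\le N}\frac{\Lambda(n)}{n}\sin(2\pi n\xi)$.

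Next I would split $V^{\varrho}(\widetilde H_N f:N\in\NN)$ into the long variation along the lacunary scales $N=2^{j}$ and the short variations inside the dyadic blocks $2^{j}\le N<2^{j+1}$. The short variations are single-scale objects: using the prime number theorem near the frequency $0$ and minor-arc cancellation elsewhere, the relevant multiplier increments are small, a Rademacher--Menshov square function controls them on $\ell^{2}$, and interpolation with the crude bounds $\norm{\widetilde H_{2^{j}}}_{\ell^{1}\to\ell^{1}},\norm{\widetilde H_{2^{j}}}_{\ell^{\infty}\to\ell^{\infty}}=O(j)$ then gives $\ell^{r}$ control for all $1<r<\infty$. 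For the lacunary scales one runs the Hardy--Littlewood circle method: Vinogradov's estimate for $\sum_{n\le N}\Lambda(n)e(n\xi)$ together with a major/minor arc decomposition yields
\[
	m_{2^{j}}(\xi)=\sum_{q\le Q_{j}}\ \sum_{\substack{1\le a\le q\\(a,q)=1}}\frac{\mu(q)}{\phi(q)}\,\Phi_{2^{j}}(\xi-a/q)\,\chi_{j}(\xi-a/q)\ +\ E_{2^{j}}(\xi),
\]
where $\Phi_{2^{j}}$ is the Fourier multiplier of the continuous truncated Hilbert transform $g\mapsto\int_{1\le\abs{y}\le 2^{j}}g(\cdot-y)\,\tfrac{dy}{y}$, $\chi_{j}$ is a smooth bump on a neighbourhood of $0$ small enough that the arcs are disjoint, $Q_{j}$ is a slowly growing cutoff from an Ionescu--Wainger admissible family, $\mu,\phi$ are the M\"obius and Euler functions, and the error $E_{2^{j}}$ has $\ell^{2}(\ZZ)\to\ell^{2}(\ZZ)$ norm $\lesssim_{A}j^{-A}$ for every $A$.

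It remains to bound the $\varrho$-variation over $j$ of the two pieces in $\ell^{r}$. The error is easy: $\sup_{j}\abs{E_{2^{j}}f}\le\big(\sum_{j}\abs{E_{2^{j}}f}^{2}\big)^{1/2}$ is bounded on $\ell^{2}$ by summability of $j^{-2A}$, and interpolation with the $O(j)$ bounds on $\ell^{1},\ell^{\infty}$ gives a summable $\ell^{r}$ bound for all $1<r<\infty$ (the variational version being handled the same way). The main term is the heart of the matter. For a single modulus $q$ the multiplier $\sum_{(a,q)=1}\Phi_{2^{j}}(\xi-a/q)\chi_{j}(\xi-a/q)$ is a periodization of a continuous multiplier, so by the Magyar--Stein--Wainger sampling principle its $\ell^{r}(\ZZ)$-norm --- and that of its maximal and $\varrho$-variational versions over $j$ --- is comparable, uniformly in $q$, to the $L^{r}(\RR)$-norm of the corresponding continuous object, which is finite by Cotlar's theorem on $\RR$ (for the maximal truncated Hilbert transform) and by the Campbell--Jones--Reinhold--Seeger variational inequality (for $\varrho>2$), both for all $1<r<\infty$. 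To sum over $q\le Q_{j}$ with the weights $\mu(q)/\phi(q)$ one invokes the Ionescu--Wainger multiplier theorem: with the moduli drawn from an Ionescu--Wainger family, the full major-arc operator and its $\varrho$-variation over the lacunary scales are bounded on $\ell^{r}(\ZZ)$ with only a polylogarithmic loss in $Q_{j}$, polynomial in $j$, which is absorbed by distributing it over the scales and exploiting the geometric gain that the continuous $\varrho$-variational estimate provides between consecutive Ionescu--Wainger levels. Reassembling the pieces gives the variational inequality, and transference then yields the theorem.

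\textbf{The main obstacle.} The genuine difficulty is concentrated in the last step: passing from the essentially orthogonal $\ell^{2}$ behaviour of the major-arc multiplier to $\ell^{r}$ for $r\ne2$, where the sum over moduli with the Ramanujan-type weights must be controlled by the Ionescu--Wainger apparatus (the partition of the moduli into level sets and the accompanying exponential-sum and sampling estimates), and then making the several logarithmic losses --- over the arithmetic moduli and over the dyadic truncation scales --- cohere so that everything sums uniformly in $N$. The number-theoretic ingredients (Vinogradov, the prime number theorem in arithmetic progressions, bounds on $\mu/\phi$) are classical; it is this analytic bookkeeping, reconciling the arithmetic and harmonic-analytic scales, where the real work lies.
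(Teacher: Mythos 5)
Your route is legitimate but genuinely different from the paper's. You aim for a transferred $\varrho$-variational inequality on $\ell^r(\ZZ)$, built from an Ionescu--Wainger major-arc decomposition, Magyar--Stein--Wainger sampling, the continuous variational (Campbell--Jones--Reinhold--Seeger) and Cotlar bounds, and Rademacher--Menshov for the short variations; this is the strategy of the later literature on variational ergodic theorems along the primes, and it does yield a.e.\ convergence for every dynamical system without a dense subclass. The paper is deliberately lighter: after the same transference to $\ZZ$ it proves (a) an $\ell^r$ maximal inequality for the truncated sums, where the role of your IW/MSW apparatus is played by the elementary Lemmas \ref{lem:1} and \ref{lem:2} about the cutoffs $\eta_s$ sampled along arithmetic progressions (this is exactly the point where Wierdl's argument is repaired), and (b) only an $\ell^2$ \emph{oscillation} inequality over $\tau$-lacunary scales with constant $C_J=o(J)$ (Theorem \ref{th:5}), which is transferred and combined with a contradiction-plus-density argument for bounded $f$. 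In particular the paper never needs short variations at all: for $f$ bounded and $\tau=1+\epsilon/4$, the difference $\lvert\mathcal{H}_Nf-\mathcal{H}_{\tau^k}f\rvert$ inside a $\tau$-block is uniformly $O(\tau-1)$ by Siegel--Walfisz, so the lacunary oscillation estimate suffices. What your approach buys is a stronger quantitative conclusion (jump/variation bounds, uniformity in the dynamical system in a sharper sense); what the paper's buys is a self-contained proof that stays at $\ell^2$ for the oscillatory part and avoids the heavy multiplier machinery.

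Two soft spots in your sketch deserve flagging. First, the decisive step --- summing the major-arc pieces over moduli in $\ell^r$, $r\ne 2$, with the variational seminorm --- is only gestured at, and the ``geometric gain \ldots that the continuous $\varrho$-variational estimate provides between consecutive Ionescu--Wainger levels'' is misattributed: the continuous estimate is uniform in the level and gives no decay; the gain comes from the weights $\mu(q)/\varphi(q)\lesssim q^{-1+\epsilon}$ via $\ell^2$ almost-orthogonality of the arcs (as in the paper's estimate \eqref{eq:23}), which must then be interpolated against an $\epsilon$-loss $\ell^r$ bound (the analogue of \eqref{eq:4}); without that interpolation the sum over denominator levels does not close for $r$ far from $2$. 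Second, the short-variation block (Rademacher--Menshov plus interpolation with the $O(j)$ crude bounds) is plausible but is precisely the kind of extra bookkeeping the statement does not require, since restricting to $\tau$-lacunary truncations and bounded functions disposes of it; if you keep the variational formulation you do need to carry it out in detail, as the increments inside a dyadic block are not individually small without a further circle-method input at each intermediate truncation.
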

In view of the transference principle, it is more convenient to work with the set of integers
rather than an abstract measure space $X$. In these settings we consider discrete singular
integrals with Calder\'{o}n--Zygmund kernels. Given $K \in C^1\big(\RR \setminus \{0\}\big)$
satisfying
\begin{equation}
	\label{eq:46}
	\abs{x} \abs{K(x)} + \abs{x}^2 \abs{K'(x)} \leq 1
\end{equation}
for $\abs{x} \geq 1$, together with a cancellation property
\begin{equation}
	\label{eq:47}
	\sup_{\lambda \geq 1}
	\bigg\lvert
	\int\limits_{1 \leq \abs{x} \leq \lambda} K(x) dx
	\bigg\rvert \leq 1
\end{equation}
a singular transform $T$ along the set of prime numbers is defined for a finitely
supported function $f: \ZZ \rightarrow \CC$ as
$$
T f(n) = \sum_{p \in \pm \PP} f(n - p) K(p) \log \abs{p}.
$$
Let $T_N$ denote the truncation of $T$, i.e.
$$
T_N f(n) = \sum_{p \in \pm \PP_N} f(n-p) K(p) \log \abs{p}.
$$
We show
\begin{theorem}
	\label{thm:2}
	The maximal function
	$$
	T^* f(n) = \sup_{N \in \NN} \big\rvert T_N f(n) \big\rvert
	$$
	is bounded on $\ell^r(\ZZ)$ for any $1 < r < \infty$. Moreover, the pointwise limit
	$$
	\lim_{N \to \infty} T_N f(n)
	$$
	exists and coincides with the Hilbert transform $Tf$ which is also bounded on $\ell^r(\ZZ)$
	for any $1 < r < \infty$.
\end{theorem}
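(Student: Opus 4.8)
\emph{Approach.} The plan is to pass to the Fourier multiplier of $T_N$ and to compare $T_N$, dyadic scale by dyadic scale, with a continuous truncated singular integral via the circle method; the $\ell^r$ bounds for $r\neq 2$ are to be extracted from an Ionescu--Wainger type multiplier estimate for the arithmetic part, the oscillation in the scale being summed by a square function together with a Rademacher--Menshov (long-jump/oscillation) argument, and the convergence assertion is then essentially a corollary of the maximal inequality. \emph{Step 1: reductions.} The difference between the sum over primes and the corresponding sum carrying the von Mangoldt weight, $\sum_{1\le\abs{k}\le N}f(n-k)\Lambda(\abs{k})K(k)$, comes only from proper prime powers $p^{\nu}$, $\nu\ge2$; by \eqref{eq:46} the associated convolution kernel has $\ell^1(\ZZ)$-norm $\lesssim\sum_p\sum_{\nu\ge2}p^{-\nu}\log p<\infty$ uniformly in $N$, so this piece is harmless for both the maximal inequality and the limit. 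We may therefore assume $T_Nf(n)=\sum_{1\le\abs{k}\le N}f(n-k)\Lambda(\abs{k})K(k)$, with Fourier multiplier $m_N(\xi)=\sum_{1\le\abs{k}\le N}\Lambda(\abs{k})K(k)e^{-2\pi ik\xi}$. By standard comparisons of sharp with smooth truncations (using \eqref{eq:46}) it then suffices to bound on $\ell^r$ the maximal function over the dyadic scales $N=2^j$ and to control the oscillation of $j\mapsto T_{2^j}f$.

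\emph{Step 2: minor arcs.} Fix $Q=Q_j=(\log 2^j)^{A}$ with $A$ large, take a smooth partition of unity on $\RR/\ZZ$ subordinate to the Farey dissection of order $Q$, and split $m_{2^j}=m_{2^j}^{\mathrm{maj}}+m_{2^j}^{\mathrm{min}}$ accordingly. Writing $e(t)=e^{2\pi it}$, Vinogradov's estimate for $\sum_{k\le N}\Lambda(k)e(k\theta)$ together with summation by parts against $K$ (via \eqref{eq:46}) gives $\norm{m_{2^j}^{\mathrm{min}}}_{L^\infty(\RR/\ZZ)}\lesssim 2^{-j\delta}$ for some $\delta=\delta(A)>0$; as this quantity is the $\ell^2\to\ell^2$ norm of the corresponding operator, the bound is summable in $j$ on $\ell^2$, and interpolation with the crude estimates $\norm{\,\cdot\,}_{\ell^1\to\ell^1},\norm{\,\cdot\,}_{\ell^\infty\to\ell^\infty}\lesssim\log 2^j$ propagates a power gain $2^{-j\delta_r}$ to every $1<r<\infty$. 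Hence the minor-arc operators, and their maximal function, are $\ell^r$-bounded after summing over $j$.

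\emph{Step 3: major arcs.} On the arc around $a/q$ with $(a,q)=1$ and $q\le Q$, the Siegel--Walfisz theorem yields, up to an error $O_A\big((\log N)^{-A}\big)$,
\[
\sum_{1\le\abs{k}\le N}\Lambda(\abs{k})K(k)\,e\big(-k(a/q+\theta)\big)=\frac{\mu(q)}{\varphi(q)}\,\Phi_N(\theta),\qquad \Phi_N(\theta)=\int\limits_{1\le\abs{x}\le N}K(x)e^{-2\pi ix\theta}\,dx,
\]
so that, up to acceptable errors,
\[
m_{2^j}^{\mathrm{maj}}(\xi)\approx\sum_{q\le Q}\frac{\mu(q)}{\varphi(q)}\sum_{\substack{a\bmod q\\(a,q)=1}}\eta\big(Q^2(\xi-a/q)\big)\,\Phi_{2^j}(\xi-a/q),
\]
with $\eta$ a fixed bump at the origin. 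Conditions \eqref{eq:46}--\eqref{eq:47} place the Euclidean multipliers $\Phi_{2^j}$, the maximal function $\sup_j\abs{\Phi_{2^j}}$, and the square function attached to the differences $\Phi_{2^{j+1}}-\Phi_{2^j}$ under control of the classical theory of truncations of Calder\'{o}n--Zygmund operators on $\RR$. Applying the Ionescu--Wainger multiplier theorem to the arithmetic factor $\sum_{q\le Q}\frac{\mu(q)}{\varphi(q)}\sum_{(a,q)=1}\eta(Q^2(\cdot-a/q))$ — whose $\ell^r$ norm is $O_{\varepsilon}(\log^{O(1)}Q)=O_{\varepsilon}(\log^{O(1)}\log 2^j)$ — in composition with these Euclidean multipliers yields the single-scale and the maximal $\ell^r$ bounds for the major-arc parts; the supremum over $j$ is summed using the square function for $\Phi_{2^{j+1}}-\Phi_{2^j}$ and a Rademacher--Menshov (long-jump/oscillation) inequality, and the attendant logarithmic loss is absorbed by enlarging $A$ in Step 2.

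\emph{Step 4: convergence and $\ell^r$-boundedness of $T$.} Steps 1--3 give $\norm{T^*f}_{\ell^r}\lesssim_r\norm{f}_{\ell^r}$ for $1<r<\infty$. If $f$ has finite support, then $T_Nf(n)$ is eventually constant in $N$ (only finitely many summands are nonzero) and hence converges to $Tf(n)$. By the maximal inequality and the density of finitely supported functions, $T_Nf$ then converges in $\ell^r$ for every $f\in\ell^r(\ZZ)$; the limit equals $Tf$, and by Fatou (or directly from $\abs{Tf}\le T^*f$) one gets $\norm{Tf}_{\ell^r}\le\norm{T^*f}_{\ell^r}\lesssim_r\norm{f}_{\ell^r}$. \emph{Main obstacle.} The crux is Step 3: one must combine the Ionescu--Wainger sampling technology for the multiplier supported near the rationals with a sufficiently quantitative Euclidean analysis of the truncations $\Phi_N$ — uniform in $N$, with usable square function and oscillation estimates for $\Phi_{2^{j+1}}-\Phi_{2^j}$ — and then balance the parameters $A$, $\delta$ and the Rademacher--Menshov loss so that every sum over the scales $j$ converges. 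The minor-arc bound for $r\neq 2$, available only through interpolation against the trivial estimate, is the other delicate point.
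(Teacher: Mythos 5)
Your skeleton (circle method, major/minor arc splitting, comparison with the continuous truncations $\Phi_N$, Fatou for the limit operator) matches the paper's, and Steps 1 and 4 are sound --- though the paper gets the pointwise limit even more directly: for $f\in\ell^r$ the tail $\sum_{\abs{p}>N}\abs{f(n-p)}\abs{K(p)}\log\abs{p}$ is finite by H\"older because $\sum_p(p^{-1}\log p)^{r'}<\infty$, so $T_Nf(n)$ converges at every $n$ without any density argument. But two points in your Steps 2--3 are genuine problems. First, the claimed minor-arc bound $\norm{m^{\mathrm{min}}_{2^j}}_{L^\infty}\lesssim 2^{-j\delta}$ is not available: with major arcs only up to $q\le(\log 2^j)^A$ (forced by Siegel--Walfisz), Vinogradov's estimate on the complementary range gives only a logarithmic saving of the form $j^{-cA}$, exactly as in Proposition \ref{prop:2}; a power saving in $2^j$ would require major arcs reaching $q\sim 2^{\epsilon j}$, which is out of reach unconditionally. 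This particular overstatement is repairable, since a $j^{-cA}$ bound interpolated against a polynomially-in-$j$ growing trivial bound still sums over $j$, but the estimate as you state it is false.

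The serious gap is Step 3, which asserts precisely the estimate that is the content of the theorem: the $\ell^r$ ($r\neq2$) bound for the maximal function over the scales of the major-arc pieces, with enough decay in the denominator parameter to sum over all rationals. The Ionescu--Wainger multiplier theorem controls a single multiplier on $\ell^r$; it does not by itself control $\sup_j$, and your square function for $\Phi_{2^{j+1}}-\Phi_{2^j}$ plus Rademacher--Menshov is not worked out --- over an unbounded family of scales that device incurs unbounded logarithmic losses unless reorganized, and the passage from a multiplier bound to a maximal bound sampled along residue classes is exactly where Wierdl's proof had its gap (Appendix \ref{apx:1}). The paper's substitute is concrete: Lemma \ref{lem:1} and Lemma \ref{lem:2} lead to estimate \eqref{eq:16}, which says that $\sup_k\big\lvert\mathcal{F}^{-1}\big(\Psi_k\eta_s\hat{f}\big)\big\rvert$ restricted to a progression $qj+l$ is controlled by $\big\lVert\mathcal{F}^{-1}\big(\eta_s\hat{f}\big)(qj+l)\big\rVert_{\ell^r(j)}$ uniformly in $l$; combined with the transference of the continuous maximal truncated Calder\'on--Zygmund bound (Proposition \ref{prop:3}) and M\"obius inversion this yields \eqref{eq:4} with an $\epsilon$-loss, and interpolation with the $\ell^2$ gain \eqref{eq:23} produces the geometric decay $2^{-\delta_r s}$ of Theorem \ref{th:3} that makes the sum over denominators converge. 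Without either this sampling lemma or a Magyar--Stein--Wainger-type vector-valued substitute (which you do not formulate), your Step 3 is an assertion rather than a proof; you label it the ``main obstacle,'' and that obstacle is the theorem.
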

For $r = 2$, the proof of Theorem \ref{thm:2} is based on the Hardy and Littlewood circle method.
These ideas were pioneered by Bourgain (see \cite{Bou1, Bou2, Bou}) in the context of pointwise
ergodic theorems along integer valued polynomials. For $r \neq 2$, initially we wanted to follow
elegant arguments from \cite{wrl} which used very specific features of the set of prime numbers.
However, we identified an issue in \cite{wrl} (see Appendix \ref{apx:1}) which made the proof
incomplete. Instead, we propose an approach (see Lemma \ref{lem:1} and \ref{lem:2}) which rectifies
Wierdl's proof (see Appendix \ref{apx:1} for details) as well as simplifies Bourgain's arguments.

Bourgain's works have inspired many authors to investigate discrete analogues of classical
operators with arithmetic features (see e.g \cite{IMSW, IW,  MSW, O, P, P1, SW0, SW1, SW2}).
Nevertheless, not many have been proved for the operators and maximal functions modelled on
the set of primes (see e.g \cite{Na1, Na2, wrl}). To the authors best knowledge, there are no other
results dealing with maximal functions corresponding with truncated discrete singular integrals.

It is worth  mentioning that Theorem \ref{thm:2} extends the result of Ionescu and Wainger
\cite{IW} to the set of prime numbers. However, our approach is different and provides a stronger
result since we study maximal functions corresponding with truncations of discrete singular
integral rather than the whole singular integral. Furthermore, we were able to define the singular
integral as a pointwise limit of its truncations. Theorem \ref{thm:2} encourages us to study
maximal functions associated with truncations of the Radon transforms from \cite{IW}. For more
details we refer the reader to the forthcoming article \cite{MT}.

Throughout the  paper, unless otherwise stated,  $C > 0$
stands for a large positive constant whose value may vary from occurrence to occurrence.
We will say that $A\lesssim B$ ($A\gtrsim B$) if there exists an absolute
constant $C>0$ such that $A\le CB$ ($A\ge CB$). If $A\lesssim B$ and $A\gtrsim B$ hold
simultaneously then we will shortly write that $A\simeq B$. We will write $A\lesssim_{\delta} B$
($A\gtrsim_{\delta} B$) to indicate that the constant $C>0$ depends on some $\delta>0$.

We always assume zero belongs to the natural numbers set $\NN$.

\section{Preliminaries}
\label{sec:2}
We start by recalling some basic facts from  number theory. A general reference is \cite{nat}.
Given $q \in \NN$ we define $A_q$ to be the set of all $a \in \ZZ \cap [1, q]$ such that
$(a, q) = 1$. By $\mu$ we denote M\"obious function, i.e. for
$q=p_1^{\alpha_1} \cdot p_2^{\alpha_2}\cdot \ldots \cdot p_n^{\alpha_n}$ where
$p_1,\ldots,p_n\in\PP$
$$
\mu(q) =
\begin{cases}
	(-1)^n & \text{if } \alpha_1 = \alpha_2 = \ldots = \alpha_n = 1,\\
	0 & \text{otherwise.}
\end{cases}
$$
In what follows, significant role will be played by the Ramanujan's identity
$$
\mu(q) = \sum_{r \in A_q}  e^{2 \pi i r a/ q} \ \ \mbox{if \  $(a, q)=1$},
$$
and the M\"obious inversion formula
\begin{align}
	\label{eqm:4}
	\sum_{a \in A_q} F(a/q)=\sum_{d \mid q}\mu(q/d)\sum_{a=1}^d F(a/d)
\end{align}
satisfied by any function $F$. Let $\varphi$ be Euler's totient function, i.e. for $q \in \NN$
the value $\varphi(q)$ is equal to the number of elements in $A_q$. Then for every $\epsilon > 0$
there is a constant $C_{\epsilon} > 0$ such that
\begin{equation}
	\label{eq:5}
	\varphi(q) \geq C_{\epsilon} q^{1-\epsilon}.
\end{equation}
Eventually, if we denote by $d(q)$ the number of divisors of $q$ then for every $\epsilon > 0$
there is a constant $C_{\epsilon} > 0$ such that
\begin{equation}
	\label{eq:6}
	d(q) \leq C_{\epsilon} q^{\epsilon}.
\end{equation}

\section{Maximal function on $\ZZ$}
\label{sec:3}
The measure space $\ZZ$ with the counting measure and
the bilateral shift operator will be our model dynamical system which permits us to prove Theorem
\ref{thm:1}.

Let us fix $\tau \in (1, 2]$ and define a set $\Lambda = \{\tau^j: j \in \NN\}$. Given a kernel
$K \in C^1(\RR \setminus \{0\})$ satisfying \eqref{eq:46} and \eqref{eq:47} we consider
a sequence $\seq{K_j}{j \in \NN}$ where
$$
K_j(x) =
\begin{cases}
	K(x) & \text{ if } \abs{x} \in (\tau^j, \tau^{j+1}],\\
	0 & \text{ otherwise.}
\end{cases}
$$
Let $\mathcal{F}$ denote the Fourier transform on $\RR$ defined for any function $f \in L^1(\RR)$
as
$$
\mathcal{F} f(\xi) = \int_\RR f(x) e^{2\pi i \xi x} dx.
$$
If $f \in \ell^1(\ZZ)$ we set
$$
\hat{f}(\xi) = \sum_{n \in \ZZ} f(n) e^{2\pi i \xi n}.
$$
Then for $\Phi_j = \mathcal{F} K_j$ by integration by parts one can show
\begin{equation}
	\label{eq:39}
	\abs{\Phi_j(\xi)} \lesssim \abs{\xi}^{-1} \tau^{-j}.
\end{equation}
We define a sequence $\seq{m_j}{j \in \NN}$ of multipliers
$$
m_j(\xi) = \sum_{p \in \pm \PP} e^{2\pi i \xi p} K_j(p) \log \abs{p}.
$$

\subsection{$\ell^2$-approximation}
\label{subsec:3}
To approximate the multiplier $m_j$ we adopt the argument introduced by Bourgain \cite{Bou}
(see also Wierdl \cite{wrl}) which is based on the Hardy--Littlewood circle method (see e.g
\cite{vau}).

For any $\alpha > 0$ and $j \in \NN$ major arcs are defined by
$$
\mathfrak{M}_j = \bigcup_{1\le q \leq j^\alpha} \bigcup_{a \in A_q} \mathfrak{M}_j(a/q)
$$
where
$$
\mathfrak{M}_j(a/q) =
\big\{
	\xi \in [0, 1]:
	\abs{\xi - a/q} \leq \tau^{-j} j^\alpha
\big\}.
$$
Here and subsequently we will treat the interval $[0, 1]$ as the circle group $\Pi=\RR/\ZZ$
identifying $0$ and $1$.
\begin{proposition}
	\label{prop:1}
	For $\xi \in \mathfrak{M}_j(a/q) \cap \mathfrak{M}_j$
	$$
	\Big\lvert
	m_j(\xi) - \frac{\mu(q)}{\varphi(q)} \Phi_j(\xi - a/q)
	\Big\rvert
	\leq C_\alpha j^{-\alpha}.
	$$
	The constant $C_\alpha$ depends only on $\alpha$.
\end{proposition}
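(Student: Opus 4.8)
The plan is to run the Hardy--Littlewood circle method applied to the weighted prime sum $m_j(\xi) = \sum_{p \in \pm\PP} e^{2\pi i \xi p} K_j(p) \log\abs{p}$. First I would replace the sum over primes by a sum over all integers carrying the von Mangoldt function $\Lambda$; that is, I would study $\sum_{n} e^{2\pi i \xi n} K_j(n) \Lambda(n)$ (summed over $\pm n$), since the contribution of prime powers $p^k$ with $k \geq 2$ in the range $\abs{n} \in (\tau^j, \tau^{j+1}]$ is $O(\tau^{-j/2} j)$ by the prime number theorem for prime powers, hence absorbed into the error term $C_\alpha j^{-\alpha}$ once $\alpha$ is fixed. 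Next, writing $\xi = a/q + \theta$ with $\abs{\theta} \leq \tau^{-j} j^\alpha$ and $(a,q) = 1$ with $1 \leq q \leq j^\alpha$, I would split the range $(\tau^j, \tau^{j+1}]$ into residue classes modulo $q$: the relevant residues $b$ are those with $(b,q) = 1$ (the others contribute negligibly, again a prime-power estimate), and on each such class $e^{2\pi i (a/q) n} = e^{2\pi i ab/q}$ is constant.

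The main analytic input is the Siegel--Walfisz theorem: for $(b,q) = 1$ and any $A > 0$,
\begin{equation}
	\label{eq:sw}
	\sum_{\atop{\tau^j < n \leq y}{n \equiv b \bmod q}} \Lambda(n)
	= \frac{1}{\varphi(q)} \big(y - \tau^j\big) + O\big(\tau^{j+1} \exp(-c\sqrt{\log \tau^j})\big)
\end{equation}
uniformly for $q \leq (\log \tau^j)^A$, which certainly covers $q \leq j^\alpha$. I would then perform summation by parts in $n$ against the smooth factor $e^{2\pi i \theta n} K_j(n)$: by \eqref{eq:46} the function $x \mapsto e^{2\pi i \theta x} K(x)$ on $(\tau^j, \tau^{j+1}]$ has total variation $\lesssim \tau^{-j} + \abs{\theta} \lesssim \tau^{-j} j^\alpha$, so the error term in \eqref{eq:sw} contributes at most $\tau^{j} \exp(-c\sqrt{\log \tau^j}) \cdot \tau^{-j} j^\alpha = j^\alpha \exp(-c\sqrt{j \log \tau})$, which beats $j^{-\alpha}$ for $j$ large (and is trivially controlled for small $j$ by adjusting $C_\alpha$). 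The main term becomes
\[
	\frac{1}{\varphi(q)} \sum_{b \in A_q} e^{2\pi i ab/q} \sum_{\atop{\tau^j < n \leq \tau^{j+1}}{n \equiv b \bmod q}} e^{2\pi i \theta n} K(n),
\]
and after peeling off the residue-class restriction (replacing the inner sum by $\frac{1}{q}$ times the unrestricted sum over the whole dyadic block, with an error again handled by summation by parts), the $b$-sum collapses via Ramanujan's identity $\sum_{b \in A_q} e^{2\pi i ab/q} = \mu(q)$, leaving $\frac{\mu(q)}{\varphi(q)} \cdot \frac1q \sum_{\tau^j < \abs{n} \leq \tau^{j+1}} e^{2\pi i \theta n} K(n)$.

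Finally I would compare this last integer sum to its continuous counterpart $\Phi_j(\theta) = \mathcal{F}K_j(\theta) = \int_{\tau^j < \abs{x} \leq \tau^{j+1}} e^{2\pi i \theta x} K(x)\, dx$ by the Euler--Maclaurin/comparison estimate; the discrepancy between $\frac1q \sum$ over a block of length $\simeq \tau^j$ and $\int$ is again bounded by the total variation $\lesssim \tau^{-j} j^\alpha$ of the integrand, hence $O(j^{-\alpha})$ after accounting for factors. Collecting all error contributions gives the claimed bound $\abs{m_j(\xi) - \frac{\mu(q)}{\varphi(q)} \Phi_j(\xi - a/q)} \leq C_\alpha j^{-\alpha}$. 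The step I expect to be most delicate is bookkeeping the uniformity in $q \leq j^\alpha$ and $\abs{\theta} \leq \tau^{-j} j^\alpha$ simultaneously through the several summation-by-parts steps, making sure every error term genuinely decays faster than $j^{-\alpha}$; the Siegel--Walfisz bound has enormous room to spare, so this is a matter of care rather than a real obstacle, but it is where a sign or exponent slip would hide.
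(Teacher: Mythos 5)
Your route is essentially the paper's: split the prime sum into residue classes modulo $q$, apply Siegel--Walfisz together with summation by parts on each invertible class, collapse the classes through Ramanujan's identity $\sum_{b \in A_q} e^{2\pi i ab/q} = \mu(q)$, and finally replace the integer sum by $\Phi_j(\xi - a/q)$; passing from $\log p$ on primes to $\Lambda$ on integers is a harmless variant, and your treatment of prime powers, of the residues with $(b,q)>1$, of the uniformity in $q \leq j^\alpha$, $\abs{\theta} \leq \tau^{-j} j^\alpha$, and the variation bound $\lesssim \tau^{-j} j^\alpha$ all match the paper. However, your main term is mis-normalized by a factor of $q$, and this is not absorbable into the error. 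Siegel--Walfisz says that for $(b,q)=1$ the weighted count $\sum_{n \leq t,\ n \equiv b \ (\mathrm{mod}\ q)} \Lambda(n)$ grows like $t/\varphi(q)$, i.e. like $1/\varphi(q)$ times the length of the \emph{whole} interval, not $1/\varphi(q)$ times the number of integers \emph{in the class}. Hence after partial summation the main term of the class-$b$ sum is $\frac{1}{\varphi(q)} \int_{\tau^j}^{\tau^{j+1}} e^{2\pi i \theta t} K(t)\, dt$ (unrestricted), equivalently $\frac{q}{\varphi(q)}$ times the class-restricted integer sum. You instead wrote $\frac{1}{\varphi(q)} \sum_{n \equiv b} e^{2\pi i \theta n} K(n)$, and after "peeling off'' the restriction your chain terminates in $\frac{\mu(q)}{q\,\varphi(q)} \sum_{\tau^j < \abs{n} \leq \tau^{j+1}} e^{2\pi i \theta n} K(n) \approx \frac{\mu(q)}{q\,\varphi(q)} \Phi_j(\xi - a/q)$, which is off by a factor $q$ from the proposition. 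The discrepancy $\big(1 - \tfrac{1}{q}\big) \frac{\abs{\mu(q)}}{\varphi(q)} \abs{\Phi_j(\xi - a/q)}$ can be of size comparable to $1/\varphi(q)$ (take $q = 2$ and $\theta$ small), which is far larger than $j^{-\alpha}$, so as written the argument does not prove the stated estimate.

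The repair is exactly the paper's bookkeeping: compare the class-$b$ prime sum directly with $\frac{1}{\varphi(q)} \sum_{n \in N_j} e^{2\pi i \theta n} K(n)$ (the \emph{full} block sum) via two parallel Abel-summation identities, one against $d\psi(t;q,b)$ and one against $dt$, which gives an error $O(j^{-2\alpha})$ per class uniformly in the stated ranges; summing over $b \in A_q$ with Ramanujan's identity then costs a factor $q \leq j^\alpha$, leaving $O(j^{-\alpha})$, and the sum-to-integral comparison you describe (error $\lesssim \tau^{-j} j^\alpha$) finishes, after repeating the argument with $p$ replaced by $-p$. With this single normalization corrected, the rest of your sketch goes through.
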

\begin{proof}
	Since for a prime number $p$, $p \mid q$ if and only if $(p\ \mathrm{mod}\ q, q) > 1$ we have
	$$
	\Big\lvert
	\sum_{\atop{1 \leq r \leq q}{(r,q) > 1}} \sum_{\atop{p \in \PP}{q \mid (p -r)}}
	e^{2\pi i \xi p} K_j(p) \log p
	\Big\rvert
	\leq
	\tau^{-j+1} \sum_{\atop{p \in \PP}{p \mid q}} \log p \lesssim \tau^{-j} \log j.
	$$
	Let $\theta = \xi - a/q$. If $p \equiv r \pmod q$ then
	$$
	\xi p \equiv \theta p + ra/q  \pmod 1
	$$
	and consequently
	\begin{equation}
		\label{eq:2}
		\sum_{r \in A_q}
		\sum_{\atop{p \in \PP}{q \mid (p - r)}}
		e^{2\pi i \xi p} K_j(p) \log p
		=
		\sum_{r \in A_q} e^{2 \pi i r a/q}
		\sum_{\atop{p \in \PP}{q \mid (p - r)}} e^{2\pi i \theta p} K_j(p) \log p.
	\end{equation}
	Using the summation by parts (see e.g \cite[p. 304]{nat}) for the inner sum on the right hand
	side in \eqref{eq:2} we obtain
	\begin{multline}
		\label{eqm:6}
		\sum_{\atop{n \in N_j}{q \mid (n-r)}} e^{2 \pi i \theta n} K(n) \ind{\PP}(n) \log n
		=
		\psi(\tau^{j+1}; q, r)e^{2 \pi i \theta \tau^{j+1}} K(\tau^{j+1})
		-\psi(\tau^{j}; q, r)e^{2 \pi i \theta \tau^{j}} K(\tau^{j})\\
		-\int_{\tau^j}^{\tau^{j+1}} \psi(t; q, r)
		\frac{d}{dt} \left(e^{2\pi i \theta t}K(t) \right) dt
	\end{multline}
	where $N_j = \NN \cap (\tau^j, \tau^{j+1}]$ and for $x \geq 2$ we have set
	$$
	\psi(x; q, r) = \sum_{\atop{p \in \PP_x}{q \mid (p-r)}} \log p.
	$$
	Similar reasoning gives
	\begin{equation}
		\label{eqm:7}
		\sum_{n \in N_j} e^{2\pi i \theta n} K(n)
		=\tau^{j+1}e^{2 \pi i \theta \tau^{j+1}} K(\tau^{j+1})
		-
		\tau^{j}e^{2 \pi i \theta \tau^{j}} K(\tau^{j})
		-
		 \int_{\tau^j}^{\tau^{j+1}} t \frac{d}{dt}
		\left( e^{2\pi i \theta t}K(t)\right)dt.
	\end{equation}
	By Siegel--Walfisz theorem (see \cite{sieg, wal}) we know that for every $\alpha>0$ and
	$x \geq 2$
	\begin{equation}
		\label{eq:34}
		\bigg\lvert \psi(x; q, r) - \frac{x}{\varphi(q)} \bigg\rvert
		\lesssim x (\log x)^{-3\alpha}
	\end{equation}
	where the implied constant depends only on $\alpha$. Therefore \eqref{eqm:6} and \eqref{eqm:7}
	combined with the estimates \eqref{eq:46} and \eqref{eq:34} yield
	\begin{multline*}
		\bigg\lvert
		\sum_{\atop{p \in \PP}{q \mid (p -r)}} e^{2\pi i\theta p} K_j(p) \log p
		-\frac{1}{\varphi(q)} \sum_{n \in \NN} e^{2\pi i \theta n} K_j(n)
		\bigg\rvert
		\lesssim
		\bigg\lvert
		\psi(\tau^{j+1};q,r) - \frac{\tau^{j+1}}{\varphi(q)}
		\bigg\rvert |K(\tau^{j+1})|\\
		+\bigg\lvert
		\psi(\tau^{j};q,r) - \frac{\tau^{j}}{\varphi(q)}
		\bigg\rvert |K(\tau^{j})|
		+
		\int_{\tau^j}^{\tau^{j+1}}
		\bigg\lvert \psi(t;q,r) - \frac{t}{\varphi(q)} \bigg\rvert
		\big(t^{-1} \abs{\theta}  + t^{-2}\big) dt\\
        \lesssim j^{-3\alpha}+
		\int_{\tau^j}^{\tau^{j+1}} (\log t)^{-3\alpha} \big(
		\abs{\theta}  + t^{-1}\big) dt
		\lesssim j^{-2\alpha}.
	\end{multline*}
	Eventually, by \eqref{eq:2},
	\begin{multline*}
		\bigg\lvert
		\sum_{r \in A_q} \sum_{\atop{p \in \PP}{q \mid (p -r)}}
		e^{2 \pi i \xi p} K_j(p) \log p
		-\frac{\mu(q)}{\varphi(q)} \sum_{n \in \NN} e^{2\pi i \theta n} K_j(n)
		\bigg\rvert	\\
		=\bigg\lvert
		\sum_{r \in A_q} e^{2 \pi i ra/q}
		\bigg(
		\sum_{\atop{p \in \PP}{q \mid (p -r)}}
		e^{2 \pi i \theta p}K_j(p) \log p
		- \frac{1}{\varphi(q)} \sum_{n \in \NN} e^{2\pi i \theta n} K_j(n)
		\bigg)
		\bigg\rvert
		\lesssim q j^{-2\alpha} \leq j^{-\alpha}.
	\end{multline*}
	Next, we can substitute an integral for the sum since for $n_0 = \lceil \tau^j \rceil$
	and $n_1 = \lfloor \tau^{j+1} \rfloor$ we have
	$$
	\int_{\tau^j}^{\tau^{j+1}} e^{2\pi i \theta t} K(t) dt =
	\int_{\tau^j}^{n_0} e^{2\pi i \theta t} K(t) dt
	+\sum_{n = n_0}^{n_1-1} \int_0^1 e^{2\pi i \theta (n + t)} K(n+t) dt
	+\int_{n_1}^{\tau^{j+1}} e^{2\pi i \theta t} K(t) dt
	$$
	thus
	\begin{multline*}
		\bigg\lvert
		\sum_{n = n_0}^{n_1-1}
		e^{2\pi i \theta n} K(n) -
		\int_0^1 e^{2\pi i \theta (n+t)} K(n+t) dt
		\bigg\rvert\\
		\leq
		\sum_{n=n_0}^{n_1-1}\int_0^1 \sabs{1-e^{-2\pi i \theta t}}\sabs{K(n)} dt
		+
		\sum_{n=n_0}^{n_1-1} \int_0^1 \sabs{K(n) - K(n+t)} dt
		\lesssim \tau^{-j} j^\alpha.
	\end{multline*}
	Repeating all the steps with $p$ replaced by $-p$ we finish the proof.
\end{proof}

For $s \in \NN$ we set
$$
\mathscr{R}_s =
\big\{
	a/q \in [0, 1]\cap\mathbb{Q}:
	2^s \leq q < 2^{s+1} \text{ and } (a, q) =1
\big\}.
$$
Since we treat $[0, 1]$ as the circle group identifying $0$ and $1$ we see that
$\mathscr{R}_0=\{1\}$. Let us consider
\begin{equation}
	\label{eq:12}
	\nu_j^s(\xi) = \sum_{a/q \in \mathscr{R}_s} \frac{\mu(q)}{\varphi(q)}
	\Phi_j(\xi - a/q) \eta_s(\xi-a/q)
\end{equation}
where $\eta_s(\xi) = \eta(A^{s+1} \xi)$ and $\eta: \RR \rightarrow \RR$ is a smooth
function such that $0 \leq \eta(x) \leq 1$ and
$$
\eta(x) = \begin{cases}
	1 & \text{for } \abs{x} \leq 1/4,\\
	0 & \text{for } \abs{x} \geq 1/2.
\end{cases}
$$
The value of $A$ is chosen to satisfy \eqref{eq:18}. Additionally, we may assume
(this will be important in the sequel) that $\eta$ is a convolution of two smooth functions
with compact supports contained in $[-1/2, 1/2]$. Let $\nu_j = \sum_{s \in \NN} \nu_j^s$.


\begin{proposition}
	\label{prop:2}
	For every $\alpha > 16$
	$$
	\big\lvert m_j(\xi) - \nu_j(\xi) \big\rvert \leq C_\alpha j^{-\alpha/4}.
	$$
	The constant $C_\alpha$ depends only on $\alpha$.
\end{proposition}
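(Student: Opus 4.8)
The plan is to establish the bound pointwise in $\xi$, first truncating the series defining $\nu_j$ and then treating major and minor arcs separately. For the truncation, note that by \eqref{eq:5} (with $\epsilon = \tfrac{1}{2}$) one has $\abs{\mu(q)/\varphi(q)} \le \varphi(q)^{-1} \lesssim q^{-1/2}$, that integrating \eqref{eq:46} gives the crude estimate $\abs{\Phi_j} \lesssim 1$, and that $0 \le \eta_s \le 1$. Moreover, since the points of $\mathscr{R}_s$ are $\gtrsim 4^{-s-1}$-separated while each translate $\eta_s(\,\cdot\, - a/q)$ is supported in a ball of radius $\tfrac{1}{2} A^{-s-1}$, for $A$ large (as is assumed) at most one term of the inner sum defining $\nu_j^s$ is nonzero at a given $\xi$; hence $\abs{\nu_j^s(\xi)} \lesssim 2^{-s/2}$ for all $s$ and $\xi$. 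Taking $N := \lceil \tfrac{\alpha}{2} \log_2 j \rceil$, this gives $\sabs{\sum_{s > N} \nu_j^s(\xi)} \lesssim 2^{-N/2} \le j^{-\alpha/4}$, so it remains to compare $m_j$ with $\tilde{\nu}_j := \sum_{s \le N} \nu_j^s$, a sum involving only denominators $q < 2^{N+1} \lesssim j^{\alpha/2} \le j^{\alpha}$. (Throughout, $j$ is taken large; for $1 \le j \le j_0(\alpha)$ the estimate is immediate since $\abs{m_j} \lesssim j$ and $\abs{\nu_j} \lesssim 1$.)

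\emph{Major arcs.} Suppose $\xi$ lies within $\tau^{-j} j^{\alpha}$ of some $a/q$ with $q \le j^{\alpha}$ and $(a,q) = 1$; for $j$ large such $a/q$ is unique. Proposition \ref{prop:1} gives $\sabs{m_j(\xi) - \tfrac{\mu(q)}{\varphi(q)} \Phi_j(\xi - a/q)} \le C_\alpha j^{-\alpha}$. On the other side, any term of $\tilde{\nu}_j(\xi)$ with denominator $q' \ne q$ has $\abs{\xi - a'/q'} \ge (q q')^{-1} - \tau^{-j} j^{\alpha} \gtrsim j^{-3\alpha/2}$, so by \eqref{eq:39} it is $O\big(\varphi(q')^{-1} \tau^{-j} j^{3\alpha/2}\big)$; summing all of them -- the factors $\varphi(q')^{-1}$ absorb the counts $\abs{A_{q'}}$ -- produces a total that is $O(\tau^{-j} j^{2\alpha})$, hence negligible. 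If moreover $q < 2^{N+1}$, then $A^{s+1} \abs{\xi - a/q} \to 0$ as $j \to \infty$, so $\eta_s(\xi - a/q) = 1$ and the $q$-term of $\tilde{\nu}_j(\xi)$ is exactly $\tfrac{\mu(q)}{\varphi(q)} \Phi_j(\xi - a/q)$; combining the above yields $\sabs{m_j(\xi) - \tilde{\nu}_j(\xi)} \lesssim_\alpha j^{-\alpha/4}$, and hence $\sabs{m_j(\xi) - \nu_j(\xi)} \lesssim_\alpha j^{-\alpha/4}$ after adding the tail. If instead $2^{N+1} \le q \le j^{\alpha}$, then $\sabs{\tfrac{\mu(q)}{\varphi(q)} \Phi_j(\xi - a/q)} \le \varphi(q)^{-1} \lesssim q^{-1/2} \lesssim j^{-\alpha/4}$, so Proposition \ref{prop:1} gives $\abs{m_j(\xi)} \lesssim_\alpha j^{-\alpha/4}$, while $\tilde{\nu}_j(\xi)$, whose denominators are all $< q$, is negligible as before; together with the tail bound, $\sabs{m_j(\xi) - \nu_j(\xi)} \lesssim_\alpha j^{-\alpha/4}$.

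\emph{Minor arcs.} Finally suppose $\abs{\xi - a/q} > \tau^{-j} j^{\alpha}$ for every $a/q$ with $q \le j^{\alpha}$ and $(a,q) = 1$, i.e. $\xi \notin \mathfrak{M}_j$. The main ingredient here -- and the step I expect to be the real obstacle -- is the minor-arc bound $\abs{m_j(\xi)} \lesssim_\alpha j^{-\alpha}$ on $\Pi \setminus \mathfrak{M}_j$, which is the classical circle-method estimate for the exponential sum $\sum_{\tau^j < p \le \tau^{j+1}} e^{2\pi i \xi p} \log p$: one applies Dirichlet's theorem with parameter $\tau^j j^{-\alpha}$ to obtain an approximant $a/q$ with $q \le \tau^j j^{-\alpha}$ and $\abs{\xi - a/q} \le \tau^{-j} j^{\alpha}$ (so necessarily $q > j^{\alpha}$), then uses the Siegel--Walfisz theorem for $q$ in an intermediate range and Vinogradov's bound via Vaughan's identity for large $q$, followed by summation by parts against $K_j$ exactly as in \eqref{eqm:6} (cf. \cite{vau}). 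Granting this, it remains to note that $\abs{\tilde{\nu}_j(\xi)} \lesssim j^{-\alpha}$ as well: for each $s \le N$ the unique possibly-nonzero term of $\nu_j^s(\xi)$ has a denominator $q \le j^{\alpha}$, so the standing hypothesis forces $\abs{\xi - a/q} > \tau^{-j} j^{\alpha}$ and hence, by \eqref{eq:39}, $\abs{\Phi_j(\xi - a/q)} \lesssim j^{-\alpha}$, whence $\abs{\nu_j^s(\xi)} \lesssim 2^{-s/2} j^{-\alpha}$ and $\abs{\tilde{\nu}_j(\xi)} \lesssim j^{-\alpha}$. Adding the tail bound, $\sabs{m_j(\xi) - \nu_j(\xi)} \lesssim_\alpha j^{-\alpha/4}$ in this case too. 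In all three cases the estimate $\sabs{m_j(\xi) - \nu_j(\xi)} \lesssim_\alpha j^{-\alpha/4}$ holds; the hypothesis $\alpha > 16$ provides the slack in the power counts above (and in the choice of $\epsilon$) needed to absorb the losses.
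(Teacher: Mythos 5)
Your argument follows essentially the same route as the paper's: the observation that at most one term of $\nu_j^s$ is nonzero at a given $\xi$ (valid since $A>4$ by \eqref{eq:18}), Proposition \ref{prop:1} together with the rational-separation estimate and \eqref{eq:39} on the major arcs, and Dirichlet plus Vinogradov on the minor arcs. Your bookkeeping differs only mildly: you truncate the $s$-sum at $2^{s}\simeq j^{\alpha/2}$ using the uniform bound $\abs{\nu_j^s}\lesssim 2^{-s/2}$ and observe that $\eta_{s}(\xi-a/q)=1$ for large $j$ on the relevant arc, whereas the paper splits at $2^{s_1}\simeq \tau^{j}j^{-2\alpha}$ and estimates the defect $1-\eta_{s_0}$ as a separate term $I_2$; both versions work, yours giving $j^{-\alpha/4}$ rather than $j^{-\alpha}$ on major arcs, which is all the proposition claims. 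The one statement you should correct is the asserted minor-arc bound $\abs{m_j(\xi)}\lesssim_{\alpha} j^{-\alpha}$: with major arcs defined by $q\le j^{\alpha}$, Dirichlet's principle produces an approximant with $j^{\alpha}< q\le \tau^{j}j^{-\alpha}$, and Vinogradov's estimate for $\sum_{p\in\PP_t}e^{2\pi i\xi p}\log p$, combined with summation by parts as in \eqref{eq:27}, yields only $\abs{m_j(\xi)}\lesssim j^{4-\alpha/2}$ (the term $\tau^{j/2}q^{1/2}$ with $q$ as large as $\tau^{j}j^{-\alpha}$ caps the saving at $j^{-\alpha/2}$, and the factor $j^{4}$ eats into it); no Siegel--Walfisz treatment of an intermediate range rescues the exponent $-\alpha$, nor is it needed. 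This overstatement is harmless, since you only use the bound $j^{-\alpha/4}$ downstream and $4-\alpha/2\le -\alpha/4$ exactly when $\alpha\ge 16$ --- which is precisely where the hypothesis $\alpha>16$ enters in the paper's proof as well.
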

\begin{proof}
	First of all notice that for a fixed $s \in \NN$ and $\xi \in [0, 1]$ the sum \eqref{eq:12}
	consists of the single term. Otherwise, there would be $a/q, a'/q' \in \mathscr{R}_s$ such that
	$\eta_s(\xi - a/q) \neq 0$ and $\eta_s(\xi-a'/q') \neq 0$. Therefore,
	$$
	2^{-2 s - 2} \leq \frac{1}{qq'} \leq
	\Big\lvert \frac{a}{q}-\frac{a'}{q'} \Big\rvert
	\leq
	\Big\lvert \xi - \frac{a}{q} \Big\rvert +
	\Big\lvert \xi - \frac{a'}{q'} \Big\rvert
	\leq A^{-s-1}
	$$
	which is not possible whenever $A > 4$, as it was assumed in \eqref{eq:18}.

	\vspace{2ex}
	\noindent{\bf{Major arcs estimates}: $\xi\in\mathfrak{M}_j(a/q) \cap \mathfrak{M}_j$.}
	Let $s_0$ be such that
	\begin{equation}
		\label{eq:35}
		2^{s_0} \leq q < 2^{s_0+1}.
	\end{equation}
	Next, we choose $s_1$ satisfying
	$$
	2^{s_1+1} \leq \tau^j j^{-2\alpha} < 2^{s_1+2}.
	$$
	If $s < s_1$ then for any $a'/q' \in \mathscr{R}_s$, $a'/q' \neq a/q$ we have
	$$
	\Big\lvert 	\xi - \frac{a'}{q'} \Big\rvert
	\geq
	\frac{1}{qq'} - \Big\lvert \xi - \frac{a}{q} \Big\rvert
	\geq
	2^{-s-1} j^{-\alpha} - \tau^{-j} j^\alpha
	\geq
	\tau^{-j} j^\alpha.
	$$
	Therefore, the integration by parts gives
	$$
	\abs{\Phi_j(\xi - a'/q')} \lesssim (\abs{\xi - a'/q'} \tau^j)^{-1} \lesssim j^{-\alpha}.
	$$
	Combining the last estimate with \eqref{eq:5}, we obtain that for some $\delta'>0$
	$$
	I_1=\bigg\lvert
	\sum_{s = 0}^{s_1-1} \sum_{\atop{a'/q' \in \mathscr{R}_s}{a'/q' \neq a/q}}
	\frac{\mu(q')}{\varphi(q')} \Phi_j(\xi - a'/q') \eta_s(\xi - a'/q')
	\bigg\rvert
	\lesssim
	j^{-\alpha}
	\sum_{s=0}^{s_1-1}
	2^{-\delta' s}.
	$$
	Moreover, if $\eta_{s_0}(\xi -a/q) < 1$ then $\abs{\xi - a/q} \geq 4^{-1} A^{-s_0-1}$. By
	\eqref{eq:35} we have $2^{s_0} \leq j^\alpha$. Hence the integration by parts implies
	$$
	I_2=\bigg\lvert
	\frac{\mu(q)}{\varphi(q)} \Phi_j(\xi - a/q) \big(1 - \eta_{s_0}(\xi-a/q)\big)
	\bigg\rvert
	\lesssim
	A^{s_0+1} \tau^{-j}
	\lesssim
	j^{-\alpha}.
	$$
	In the last estimate it is important that the implied constant does not depend on $s_0$.
	Since $\Phi_j$ is bounded uniformly with respect to $j \in \NN$, by \eqref{eq:5} and the
	definition of $s_1$ we have
	$$
	I_3=\bigg\lvert
	\sum_{s = s_1}^\infty\sum_{\atop{a'/q' \in \mathscr{R}_s}{a'/q' \neq a/q}}
	\frac{\mu(q')}{\varphi(q')} \Phi_j (\xi - a'/q') \eta_s(\xi - a'/q')
	\bigg\rvert
	\lesssim
	\sum_{s = s_1}^\infty 2^{-\delta'' s}
	\lesssim (\tau^{-j}j^{2\alpha})^{\delta''}\lesssim j^{-\alpha}
	$$
	for appropriately chosen $\delta''>0$. Eventually, in view of Proposition \ref{prop:1} and
	definitions of $s_0$ and $s_1$ we conclude
	\begin{align*}
		\big\lvert m_j (\xi) - \nu_j(\xi) \big\rvert
		\leq C_\alpha j^{-\alpha}+ I_1+I_2+I_3\lesssim j^{-\alpha}.
	\end{align*}
	
	\vspace*{2ex}
	\noindent{\bf Minor arcs estimates: $\xi\not\in\mathfrak{M}_j$.} Firstly, by the summation by
	parts, we get
	\begin{equation}
		\label{eq:27}
		\Big\lvert
		\sum_{p \in \PP} e^{2\pi i \xi p} K_j (p) \log p
		\Big\rvert \leq \abs{F_{\tau^{j+1}}(\xi)}|K(\tau^{j+1})|
		+\abs{F_{\tau^{j}}(\xi)}|K(\tau^{j})|+
		\int_{\tau^j}^{\tau^{j+1}} \abs{F_t(\xi)}
		\lvert
		K'(t)
		\rvert
		dt
	\end{equation}
	where
	$$
	F_x(\xi) = \sum_{p \in \PP_x} e^{2 \pi i \xi p} \log p.
	$$
	Using Dirichlet's principle there are $(a, q) = 1$, $j^\alpha \leq q \leq \tau^j j^{-\alpha}$
	such that
	$$
	\abs{\xi - a/q} \leq q^{-1} \tau^{-j} j^\alpha \leq q^{-2}.
	$$
	Thus, by Vinogradov's theorem (see \cite[Theorem 1, Chapter IX]{vin} or \cite[Theorem 8.5]{nat})
	we get
	$$
	\abs{F_t(\xi)} \lesssim j^4 (\tau^j q^{-1/2} + \tau^{4 j/5} + \tau^{j/2} q^{1/2})
	\lesssim \tau^j j^{4 - \alpha/2}
	$$
	for $t \in [\tau^j, \tau^{j+1}]$. Combining $\abs{K'(t)} \lesssim \tau^{-2j}$ with the
	last bound and \eqref{eq:27} we conclude
	$$
	\abs{m_j(\xi)} \lesssim j^{4 - \alpha/2}\lesssim j^{-\alpha/4}
	$$
	since $\alpha > 16$. In order to estimate the $\nu_j$ let us define $s_1$ by setting
	$$
	2^{s_1} \leq j^{\alpha/2} < 2^{s_1+1}.
	$$
	If $a/q \in \mathscr{R}_s$ for $s < s_1$ then $q < j^{\alpha}$ and
	$$
	\Big \lvert \xi - \frac{a}{q} \Big\rvert
	\geq 2^{-s - 1} \tau^{-j} j^{\alpha} \gtrsim \tau^{-j} j^{\alpha/2}.
	$$
	Then again by the integration by parts we obtain
	$$
	\abs{\Phi_j(\xi - a/q)} \lesssim (\abs{\xi - a/q} \tau^j)^{-1} \lesssim j^{-\alpha/2}.
	$$
	Therefore, the first part of the sum may be majorized by
	$$
	\Big\lvert \sum_{s = 0}^{s_1-1} \nu_j^s(\xi) \Big\vert
	\lesssim j^{-\alpha/2} \sum_{s = 0}^{\infty} 2^{-\delta' s},
	$$
	as for $I_1$. For the second part we proceed as for $I_3$ to get
	$$
	\Big\lvert \sum_{s = s_1}^\infty \nu_j^s(\xi)\Big\rvert \lesssim \sum_{s=s_1}^\infty
	2^{-\delta'' s} \lesssim j^{-\delta'' \alpha/2}\lesssim j^{-\alpha/4}.
	$$
	A suitable choice of $\delta', \delta''>0$ in both estimates above was possible thanks to
	\eqref{eq:5}.
\end{proof}

\subsection{$\ell^r$-theory}
\label{subsec:4}
We start the section by proving two lemmas which will play crucial role.
\begin{lemma}
	\label{lem:1}
	There is a constant $C > 0$ such that for all $s \in \NN$  and $u\in\RR$
	\begin{equation}
		\label{eq:13}
		\bigg\lVert
		\int_{-\frac{1}{2}}^{\frac{1}{2}} e^{-2\pi i \xi j} \eta_s(\xi) d\xi
		\bigg\rVert_{\ell^1(j)} \leq C,
	\end{equation}
	\begin{equation}
		\label{eq:14}
		\bigg\lVert
		\int_{-\frac{1}{2}}^{\frac{1}{2}} e^{-2\pi i \xi j}
		\big(1-e^{2\pi i \xi u}\big)\eta_s(\xi)d\xi
		\bigg\rVert_{\ell^1(j)} \leq C |u| A^{-s-1}.
	\end{equation}
\end{lemma}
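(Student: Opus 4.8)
The plan is to recognize the object in question as (a dilate of) the inverse Fourier transform of a fixed bump function, and then to quantify its decay using the smoothness of $\eta$. Recall that $\eta_s(\xi) = \eta(A^{s+1}\xi)$ and that $\eta$ is supported in $[-1/2,1/2]$, so the integral over $[-1/2,1/2]$ is really an integral over all of $\RR$: writing $b_s = A^{s+1}$, the change of variables $\xi \mapsto \xi/b_s$ gives
\[
\int_{-\frac12}^{\frac12} e^{-2\pi i \xi j}\eta_s(\xi)\,d\xi
= b_s^{-1}\int_\RR e^{-2\pi i \xi j/b_s}\eta(\xi)\,d\xi
= b_s^{-1}\,\check{\eta}(j/b_s),
\]
where $\check\eta(t) = \int_\RR e^{-2\pi i \xi t}\eta(\xi)\,d\xi = \mathcal{F}\eta(-t)$. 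Since $\eta$ is smooth with compact support, $\check\eta$ is Schwartz; in particular $|\check\eta(t)| \lesssim (1+|t|)^{-2}$. Hence the $\ell^1(j)$ norm in \eqref{eq:13} is bounded by $b_s^{-1}\sum_{j\in\ZZ}(1+|j|/b_s)^{-2}$, and comparing the sum with the integral $\int_\RR (1+|t|/b_s)^{-2}\,dt = b_s\int_\RR(1+|t|)^{-2}\,dt$ yields a bound $\lesssim 1$ uniformly in $s$, which is \eqref{eq:13}. One should be slightly careful near $j=0$ (where the integral comparison loses a term), but this only costs an additive constant, which is harmless.

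For \eqref{eq:14} the same computation applies to the function $\eta_s(\xi)(1-e^{2\pi i \xi u})$. After the substitution this becomes $b_s^{-1}\int_\RR e^{-2\pi i \xi j/b_s}\eta(\xi)(1-e^{2\pi i \xi u/b_s})\,d\xi$, i.e. $b_s^{-1}\big(\check\eta(j/b_s) - \check\eta(j/b_s - u/b_s)\big)$. The point is to extract a factor of $|u|A^{-s-1} = |u|/b_s$. I would do this by writing the bracket as an integral of the derivative:
\[
\check\eta(j/b_s) - \check\eta\big((j-u)/b_s\big)
= \frac{u}{b_s}\int_0^1 \check\eta\,'\!\big((j - tu)/b_s\big)\,dt,
\]
so that the $\ell^1(j)$ norm is at most $\dfrac{|u|}{b_s^2}\displaystyle\int_0^1\sum_{j\in\ZZ}\big|\check\eta\,'\!\big((j-tu)/b_s\big)\big|\,dt$. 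Since $\eta$ is smooth and compactly supported, $\check\eta\,'$ is again Schwartz, so $|\check\eta\,'(v)|\lesssim(1+|v|)^{-2}$, and the inner sum over $j$ (for each fixed $t$, uniformly in the shift $tu$) is comparable to $b_s\int_\RR(1+|v|)^{-2}\,dv \lesssim b_s$. Carrying the remaining $b_s^{-2}$ and the $|u|$ gives the desired bound $\lesssim |u|\,b_s^{-1} = |u|A^{-s-1}$, uniformly in $s$.

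The only genuinely delicate point is making the integral–sum comparison uniform in $s$, that is, making sure the implied constant in $\sum_{j\in\ZZ}(1+|j|/b_s)^{-2} \lesssim b_s$ does not degenerate as $s\to\infty$ (when $b_s\to\infty$); this is exactly the regime where the comparison with $\int_\RR(1+|t|/b_s)^{-2}\,dt$ is cleanest, since the summand is slowly varying on scale $b_s\ge A\ge 4 > 1$, so a standard monotonicity/mean-value argument on each unit interval $[j,j+1]$ closes it with an absolute constant. (The hypothesis that $\eta$ is a convolution of two bump functions, used elsewhere in the paper, is not needed here — plain smoothness of $\eta$ already makes $\check\eta$ and $\check\eta\,'$ decay faster than any polynomial, and quadratic decay is all we use.) Replacing the crude $(1+|t|)^{-2}$ bounds by $(1+|t|)^{-N}$ for large $N$ would of course also work and make the summability transparent.
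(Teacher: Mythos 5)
Your proof is correct, but it follows a genuinely different route from the paper's. You exploit that $\eta_s$ is supported well inside $[-\frac12,\frac12]$, rescale to write the integral as $A^{-s-1}\check\eta(j A^{-s-1})$ (resp.\ a difference of two such values), and then use pointwise Schwartz-type decay of $\check\eta$ and $\check\eta\,'$ together with a sum--integral comparison uniform in the scale and in the shift; the factor $\abs{u}A^{-s-1}$ comes out of the fundamental theorem of calculus applied to $\check\eta$. The paper instead never uses pointwise decay of the kernel: it invokes the standing assumption that $\eta=\phi*\psi$ with $\phi,\psi$ smooth and supported in $[-\frac12,\frac12]$, so that $\eta_s=A^{s+1}\phi_s*\psi_s$ and the sequence in question is (up to the factor $A^{s+1}$) a pointwise product of the two integer-sampled inverse transforms; the $\ell^1(j)$ norm is then bounded by Cauchy--Schwarz and Plancherel/Parseval (legitimate because the supports lie in $[-\frac12,\frac12]$), with the difference $1-e^{2\pi i\xi u}$ estimated in $L^2$ by $\abs{u}\abs{\xi}$. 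The paper's argument thus uses only $L^2$ information and is the reason the convolution hypothesis on $\eta$ appears at all, while your argument shows, as you observe, that this hypothesis is not needed for this particular lemma. Two small points in your favor that are worth being explicit about: your constants depend only on $\eta$ and not on $A$ (since $A^{s+1}\ge A>4\ge 1$ is all you use), which matters because $A$ is later chosen in \eqref{eq:18} in terms of the constant $C$ of this lemma, so any $A$-dependence would be circular; and you actually treat \eqref{eq:13} as well as \eqref{eq:14}, whereas the paper proves \eqref{eq:14} and notes \eqref{eq:13} is analogous.
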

\begin{proof}
	We only show \eqref{eq:14} for $u\in\RR$, since the proof of \eqref{eq:13} is almost identical.
	Recall, $\eta = \phi * \psi$ for $\psi, \phi$ smooth functions with supports inside
	$[-1/2, 1/2]$. Hence, $\eta_s = A^{s+1} \phi_s * \psi_s$ and
	$$
	A^{-s-1} \int_{-\frac{1}{2}}^{\frac{1}{2}} e^{-2\pi i \xi j}\big(1 - e^{2\pi i \xi u}\big)
		\eta_s(\xi) d\xi
	=\mathcal{F}^{-1} \phi_s (j) \mathcal{F}^{-1} \psi_s(j)
	- \mathcal{F}^{-1} \phi_s(j-u) \mathcal{F}^{-1} \psi_s(j-u).
	$$
	By Cauchy--Schwatz's inequality and Plancherel's theorem
	\begin{multline*}
		\sum_{j \in \ZZ}
		\sabs{\mathcal{F}^{-1} \phi_s(j)}
		\sabs{\mathcal{F}^{-1} \psi_s(j)-\mathcal{F}^{-1} \psi_s(j-u)}
		\leq
		\norm{\mathcal{F}^{-1} \phi_s}_{\ell^2}
		\bigg\lVert
		\int_\RR e^{-2\pi i \xi j} \big(1-e^{2\pi i \xi u}\big)
		\psi_s(\xi) d\xi
		\bigg\rVert_{\ell^2(j)}\\
		=
		\norm{\phi_s}_{L^2}
		\norm{\big(1-e^{2\pi i \xi u}\big) \psi_s(\xi)}_{L^2(d\xi)}.
	\end{multline*}
	Moreover, since
	$$
	\int_\RR \sabs{1 - e^{-2\pi i \xi u}}^2 \abs{\psi_s(\xi)}^2 d\xi
	\lesssim u^2 \int_\RR \abs{\xi}^2 \abs{\psi_s(\xi)}^2 d\xi
	\lesssim u^2 A^{-3(s+1)} \norm{\psi}_{L^2}^2
	$$
	we obtain
	$$
	\sum_{j \in \ZZ} \sabs{\mathcal{F}^{-1} \phi_s(j)}
	\sabs{\mathcal{F}^{-1} \psi_s(j)- \mathcal{F}^{-1} \psi_s(j-u)}
	\lesssim |u| A^{-2(s+1)} \norm{\phi}_{L^2} \norm{\psi}_{L^2}
	$$
	which finishes the proof of \eqref{eq:14}.
\end{proof}

\begin{lemma}
	\label{lem:2}
	Let $r > 1$. For all $q \in [2^s, 2^{s+1})$, $s \ge r$ and
	$l \in \{1, 2, \ldots, q\}$
	$$
	\big\lVert
	\mathcal{F}^{-1} \big(\eta_s \hat{f}\big)(q j + l)
	\big\rVert_{\ell^{r}(j)}
	\simeq
	q^{-1/r}
	\big\lVert
	\mathcal{F}^{-1} \big(\eta_s \hat{f}\big)
	\big\rVert_{\ell^{r}}.
	$$
\end{lemma}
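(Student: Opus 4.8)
The plan is to exploit that $F:=\mathcal{F}^{-1}\big(\eta_s\hat f\big)$ has its frequency support in the tiny arc $\{\abs{\xi}\le\tfrac12 A^{-s-1}\}$ and is therefore almost constant along any arithmetic progression of step $q<2^{s+1}\ll A^{s+1}$. Since
$$
\norm{F}_{\ell^r}^r=\sum_{l=1}^{q}\norm{F(qj+l)}_{\ell^r(j)}^r,
$$
it suffices to show that the $q$ numbers $\norm{F(q\cdot+l)}_{\ell^r}$, $l\in\{1,\dots,q\}$, pairwise differ by at most $\tfrac12 q^{-1/r}\norm{F}_{\ell^r}$. Indeed, granting this and assuming $F\not\equiv 0$, put $c_l=\norm{F(q\cdot+l)}_{\ell^r}/\norm{F}_{\ell^r}$, so that $\sum_{l=1}^{q}c_l^r=1$; then $\min_l c_l\le q^{-1/r}\le\max_l c_l$, and since all the $c_l$ lie within $\tfrac12 q^{-1/r}$ of one another we conclude $\tfrac12 q^{-1/r}\le c_l\le\tfrac32 q^{-1/r}$ for every $l$, which is exactly the asserted equivalence.

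To prove the difference estimate I would bring in an auxiliary cut-off $\chi_s(\xi)=\eta\big(2^{-1}A^{s+1}\xi\big)$. It is again of the form needed in Lemma \ref{lem:1} (a rescaled convolution of two smooth functions with compact support), it is supported in $\{\abs{\xi}\le A^{-s-1}\}$, and it equals $1$ on the support of $\eta_s$; hence $\widehat F=\chi_s\widehat F$. Consequently, for every $h\in\ZZ$,
$$
F(\cdot-h)-F(\cdot)=L_{s,h}*F,\qquad\text{where}\qquad\widehat{L_{s,h}}(\xi)=\big(e^{2\pi i h\xi}-1\big)\chi_s(\xi),
$$
and the computation proving \eqref{eq:14}, run with $\chi_s$ in place of $\eta_s$, gives $\norm{L_{s,h}}_{\ell^1(\ZZ)}\le C\abs{h}A^{-s-1}$. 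Taking $h=l'-l$ for $l,l'\in\{1,\dots,q\}$, so that $\abs{h}<q<2^{s+1}$, translation invariance of the $\ell^r$ norm together with the triangle and Young inequalities yields
$$
\big\lvert\,\norm{F(q\cdot+l)}_{\ell^r}-\norm{F(q\cdot+l')}_{\ell^r}\,\big\rvert
\le\norm{L_{s,h}*F}_{\ell^r}\le\norm{L_{s,h}}_{\ell^1}\norm{F}_{\ell^r}\le C\,(2/A)^{s+1}\,\norm{F}_{\ell^r}.
$$

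Finally one checks that $C(2/A)^{s+1}\le\tfrac12 q^{-1/r}$: since $q<2^{s+1}$ and $r>1$ one has $q^{-1/r}>2^{-(s+1)/r}\ge 2^{-(s+1)}$, so it is enough that $2C\le(A/4)^{s+1}$, and because $s\ge r>1$ forces $s\ge 2$ this holds as soon as $A$ is large, which is part of the requirement \eqref{eq:18} on $A$. The step I expect to be the real obstacle is the reduction in the second paragraph: one must realise the increment $F(\cdot-h)-F(\cdot)$ as a convolution operator applied to $F$ itself — not to $f$, for which only the useless bound $\norm{L_{s,h}}_{\ell^1}\norm{f}_{\ell^r}$ is available — and this is precisely why the slightly wider cut-off $\chi_s$, equal to $1$ on the whole support of $\eta_s$, has to be inserted before Lemma \ref{lem:1} can be invoked; everything after that is elementary bookkeeping with the small parameter $(2/A)^{s+1}$.
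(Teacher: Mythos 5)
Your argument is correct and is essentially the paper's proof in light disguise: the crux in both is that the progression norms $\lVert F(q\cdot+l)\rVert_{\ell^r(j)}$, with $F=\mathcal{F}^{-1}\big(\eta_s\hat f\big)$, differ pairwise by at most $CqA^{-s}\lVert F\rVert_{\ell^r}$, which comes from the $\ell^1$ kernel bound \eqref{eq:14} of Lemma \ref{lem:1} applied to a wider bump equal to $1$ on the support of $\eta_s$ --- the paper takes this bump to be $\eta_{s-1}$ (via $\eta_s=\eta_s\eta_{s-1}$ and Minkowski's inequality) exactly where you introduce $\chi_s$ and argue through $F(\cdot-h)-F=L_{s,h}*F$ and Young's inequality. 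The only divergence is the elementary endgame (your pigeonhole on the normalized $c_l$ with $\sum_{l=1}^{q}c_l^r=1$, versus the paper's raising $J_l\le J_{l'}+CqA^{-s}I$ to the $r$-th power, summing over $l'$, and absorbing the error using $s\ge r$), and both endgames give the stated equivalence with absolute constants.
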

\begin{proof}
	We define a sequence $\big(J_1, J_2, \ldots, J_q\big)$ by
	$$
	J_l =
	\big\lVert
	\mathcal{F}^{-1} \big(\eta_s \hat{f} \big)(qj+l)
	\big\rVert_{\ell^r(j)}.
	$$
	Then $J_1^r + J_2^r + \ldots + J_q^r = I^r$ where
	$I = \big\lVert \mathcal{F}^{-1} \big(\eta_s \hat{f} \big) \big\rVert_{\ell^r(j)} $.
	Since $\eta_s = \eta_s \eta_{s-1}$, by Minkowski's inequality we obtain
	\begin{multline*}
		\big\lVert
		\mathcal{F}^{-1} \big(\eta_s \hat{f} \big)(qj+l)
		-\mathcal{F}^{-1}\big(\eta_s \hat{f} \big)(qj+l')
		\big\rVert_{\ell^r(j)}
		=
		\bigg\lVert
		\int_{-\frac{1}{2}}^{\frac{1}{2}} e^{-2\pi i \xi (q j+l)}
		\big(1 - e^{2\pi i\xi (l-l')}\big)
		\eta_s(\xi) \hat{f}(\xi) d\xi
		\bigg\rVert_{\ell^r(j)}\\
		\leq
		\bigg\lVert
		\int_{-\frac{1}{2}}^{\frac{1}{2}} e^{-2\pi i \xi j} \big(1 - e^{2\pi i\xi (l-l')}\big)
		\eta_{s-1}(\xi) d\xi
		\bigg\rVert_{\ell^1(j)} I
		\leq C q A^{-s} I
	\end{multline*}	
	where in the last step we have used Lemma \ref{lem:1}. We notice, the constant $C > 0$ depends
	only on $\eta$. Hence, for all $l,l' \in \{1,2,\ldots, q\}$
	$$
	J_l \leq J_{l'} + C q A^{-s} I.
	$$
	Since $q < 2^{s+1}$ taking
	\begin{equation}
		\label{eq:18}
		A > 32 \max\{1, C\}
	\end{equation}
	we obtain $C q A^{-s} \leq 2^{-4s+1}$ thus
	\begin{equation}
		\label{eq:15}
		J_l^r \leq 2^{r-1}J_{l'}^r + 2^{r-1}\big(C q A^{-s}\big)^r I^r
		\leq 2^{r-1}J_{l'}^r + 2^{2r-4s-1} I^r.
	\end{equation}
	Therefore,
	$$
	I^r = J_1^r + J_2^r + \ldots + J_q^r
	\leq 2^{r-1}q J_l^r + q2^{2r-4s-1} I^r\leq 2^{r-1}q J_l^r + 2^{3r-3s-1} I^r
	$$
	and using $s > r$, we get $I^r \leq 2^{r} q J_l^r$. For the converse inequality, we use
	again \eqref{eq:15} to conclude
	$$
	q J_l^r \leq 2^{r-1}\big(J_1^r + J_2^r + \ldots + J_q^r\big) + q 2^{2r-4s-1} I^r \leq 2^r I^r.
	$$
\end{proof}

\begin{proposition}
	\label{prop:3}
	For $r > 1$ and $s\in\NN$
	$$
	\Big\lVert
	\sup_{k \in \NN}
	\big\lvert
	\mathcal{F}^{-1} \big(\Psi_k \eta_s \hat{f}  \big)
	\big\rvert
	\Big\rVert_{\ell^r}
	\leq
	C_r
	\big\|\mathcal{F}^{-1} \big(\eta_s \hat{f}  \big)\big\|_{\ell^r}
	$$
	where $\Psi_k = \sum_{j = 0}^k \Phi_j$.
\end{proposition}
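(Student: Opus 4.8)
The plan is to reduce the maximal estimate for the multipliers $\Psi_k\eta_s$ acting on $\mathcal F^{-1}(\eta_s\hat f)$ to a maximal estimate for a continuous singular-integral truncation, and then to exploit the periodicity structure of the Fourier support near $a/q$ through Lemma \ref{lem:2}. First I would observe that $\Psi_k=\sum_{j=0}^k\Phi_j$ is a partial sum of $\Phi_j=\mathcal F K_j$, so that $\Psi_k=\mathcal F(\mathds 1_{\{|x|\le\tau^{k+1}\}}K)$ up to the harmless endpoint piece; thus $\mathcal F^{-1}(\Psi_k\hat g)(n)$ is essentially the continuous truncated singular integral $\int_{|y|\le\tau^{k+1}}g(n-y)K(y)\,dy$ sampled at integers, applied to $g=\mathcal F^{-1}(\eta_s\hat f)$. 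The kernel $K$ satisfies \eqref{eq:46} and \eqref{eq:47}, so by the classical Calder\'on--Zygmund theory the maximal truncation operator $g\mapsto\sup_N|\int_{|y|\le N}g(\cdot-y)K(y)\,dy|$ is bounded on $L^r(\RR)$ for every $1<r<\infty$.

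The issue is that we need an $\ell^r(\ZZ)$ bound, and the function $g=\mathcal F^{-1}(\eta_s\hat f)$ lives on $\ZZ$, with Fourier transform supported in $\bigcup_{a/q\in\mathscr R_s}(a/q-A^{-s-1}/2,\,a/q+A^{-s-1}/2)$ — a union of $\varphi(q)\le 2^{s+1}$ tiny arcs for each $q\in[2^s,2^{s+1})$. Here is where Lemma \ref{lem:2} enters: for $s\ge r$ it tells us that the $\ell^r$ norm of $\mathcal F^{-1}(\eta_s\hat f)$ restricted to any arithmetic progression $qj+l$ is comparable, uniformly in $l$, to $q^{-1/r}$ times the full $\ell^r$ norm. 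This near-equidistribution across residues mod $q$ is exactly what is needed to transfer the continuous $L^r$ estimate to the discrete setting without losing the $q$-dependence: one builds a comparison function on $\RR$ (for instance by interpolating or by a de Leeuw-type transference) whose $L^r(\RR)$ norm is controlled by $\|\mathcal F^{-1}(\eta_s\hat f)\|_{\ell^r(\ZZ)}$ and whose maximal truncated singular integral dominates, at integer points, $\sup_k|\mathcal F^{-1}(\Psi_k\eta_s\hat f)|$. For the case $s<r$ (finitely many $s$ once $r$ is fixed), one argues directly: $\mathscr R_s$ has boundedly many elements, each $\nu_j^s$-type piece is a modulated bump of width $A^{-s-1}$, and Lemma \ref{lem:1} gives $\ell^1$ control of the relevant convolution kernels, so the maximal function is handled crudely by summing finitely many terms.

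The main obstacle I anticipate is making the transference from $\ell^r(\ZZ)$ to $L^r(\RR)$ genuinely uniform in $s$ while keeping the constant independent of $s$: a naive sampling argument loses a factor growing in $s$ because the Fourier support of $g$ is spread over $\sim 2^s$ arcs. Lemma \ref{lem:2} is designed precisely to kill that loss, but one must apply it carefully — dyadically in $q$, i.e. first freezing $q\in[2^s,2^{s+1})$, using that $\eta_s=\eta_s\eta_{s-1}$ so that the multiplier factors, and then summing the nearly-orthogonal contributions of the $\varphi(q)$ arcs associated to a single $q$ via the progression-splitting of Lemma \ref{lem:2}. A secondary technical point is dealing with the discontinuity of $\mathds 1_{\{|x|\le\tau^{k+1}\}}$ versus the smooth dyadic cutoffs implicit in $\Phi_j$: one absorbs the single boundary term $K_k$ using \eqref{eq:39}, which gives $|\Phi_k(\xi)|\lesssim|\xi|^{-1}\tau^{-k}$, summable against the $\ell^1$ kernels from Lemma \ref{lem:1}. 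Once these two points are in place the constant $C_r$ depends only on $r$ (and on the fixed $\eta$), as claimed.
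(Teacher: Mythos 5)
Your plan misreads what Proposition \ref{prop:3} is about, and the misreading drives the key step of your argument. Here the multiplier is $\eta_s(\xi)\hat f(\xi)$ with $\eta_s$ supported in a \emph{single} interval $\abs{\xi}\leq A^{-s-1}/2$ around $0$: no rationals $a/q$, no modulus $q$, and no union of $\sim 2^s$ arcs appear in the statement (that structure belongs to $\nu_j^s$ and enters only in Theorem \ref{th:3}, where Lemma \ref{lem:2} is genuinely used). Consequently the $s$-dependent loss you fear from ``the Fourier support spread over $\sim 2^s$ arcs'' does not arise, and your proposed remedy --- Lemma \ref{lem:2} with a splitting into progressions $qj+l$ --- has no $q$ to attach to and, more importantly, cannot perform the task you assign to it: Lemma \ref{lem:2} compares the $\ell^r(j)$ norm of a fixed discrete function along a progression with its full $\ell^r$ norm; it says nothing about comparing $\ell^r(\ZZ)$ with $L^r(\RR)$, which is the transference your outline actually needs. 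Your case split $s<r$ versus $s\geq r$ is an artifact of importing that lemma; the proposition must be proved uniformly in all $s\in\NN$, with no such split.

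What survives of your plan --- the maximal truncated Calder\'on--Zygmund bound on $L^r(\RR)$ for the kernel $K$ (this is the paper's appeal to \cite{riv}) plus a band-limited transference between $\ZZ$ and $\RR$ --- is indeed the skeleton of the paper's proof, but the transference is the entire technical content and you leave it at ``interpolating or a de Leeuw-type transference.'' The paper closes it with the factorization $\eta_s=\eta_s\eta_{s-1}$ and Lemma \ref{lem:1}, not Lemma \ref{lem:2}: since $\eta_s=\eta_s\eta_{s-1}$, the value $\mathcal{F}^{-1}\big(\Psi_k\eta_s\hat f\big)(m)$ at an integer $m$ is the convolution of the continuous function $t\mapsto\mathcal{F}^{-1}\big(\Psi_k\eta_s\hat f\big)(t)$ with $\mathcal{F}^{-1}\eta_{s-1}$; H\"older together with the bounds $\norm{\mathcal{F}^{-1}\eta_{s-1}}_{L^1}\lesssim 1$ and $\sum_{m\in\ZZ}\sabs{\mathcal{F}^{-1}\eta_{s-1}(m-t)}\lesssim 1$ (uniform in $s$ by the scaling of the bump) gives $\norm{\sup_k\abs{\cdot}}_{\ell^r}\lesssim\norm{\sup_k\abs{\cdot}}_{L^r}$, then the continuous maximal bound applies, and finally $\norm{\mathcal{F}^{-1}(\eta_s\hat f)}_{L^r}\lesssim\norm{\mathcal{F}^{-1}(\eta_s\hat f)}_{\ell^r}$ is obtained by comparing values at $x+j$ and $j$ and invoking estimate \eqref{eq:14} of Lemma \ref{lem:1} with $\abs{u}\leq 1$, again through $\eta_s=\eta_s\eta_{s-1}$. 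Until you supply this two-sided, $s$-uniform comparison (or an equivalent sampling argument), your proof has a genuine gap; as a minor further point, no boundary-term correction for the truncation is needed, since $\Psi_k$ is exactly the Fourier transform of $K\ind{\{1<\abs{x}\leq\tau^{k+1}\}}$.
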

\begin{proof}
	Since $\eta_s=\eta_{s-1}\eta_s$ thus by H\"older's inequality we have
	\begin{multline*}
		\sup_{k \in \NN} \big\lvert \mathcal{F}^{-1}
		\big(\Psi_k \eta_s \hat{f} \big) (m) \big\rvert^r
		\leq
		\bigg(\int_{\RR}
		\sup_{k \in \NN}
		\big\lvert
		\mathcal{F}^{-1} \big(\Psi_k \eta_s\hat{f} \big)(t)\big\rvert
		\big\lvert \mathcal{F}^{-1}\eta_{s-1}(m - t)\big\rvert dt\bigg)^r\\
		\leq
		\int_{\RR} \sup_{k \in \NN}
		\big\lvert \mathcal{F}^{-1} \big(\Psi_k \eta_s\hat{f} \big)(t) \big\rvert^r
		\big\lvert \mathcal{F}^{-1} \eta_{s-1}(m - t) \big\rvert dt\
		\norm{\mathcal{F}^{-1} \eta_{s-1}}_{L^{1}}^{r-1}.
	\end{multline*}	
	Now we note that $\norm{\mathcal{F}^{-1} \eta_{s-1}}_{L^{1}}\lesssim 1$ and
	$$
	\sum_{m \in \ZZ} \big\lvert \mathcal{F}^{-1}\eta_{s-1} (m-t) \big\rvert
	\lesssim A^{-s}\sum_{m \in \ZZ} \frac{1}{1 + (A^{-s}(m - t))^2}
	\lesssim A^{-s}\bigg(1+\int_{\RR}
	\frac{dx}{1 +  (A^{-s}x)^2 }\bigg)\lesssim A^{-s}(1+A^s)\lesssim1
	$$
	and the implied constants are independent of $A$. Thus we obtain
	\begin{equation}
		\label{eq:57}
		\Big\lVert
		\sup_{k \in \NN} \big\lvert \mathcal{F}^{-1} \big(\Psi_k \eta_s\hat{f} \big)\big\rvert
		\Big\rVert_{\ell^r}
		\lesssim
		\Big\lVert
		\sup_{k \in \NN} \big\lvert \mathcal{F}^{-1} \big(\Psi_k \eta_s\hat{f} \big)\big\rvert
		\Big\rVert_{L^r}\lesssim \big\|\mathcal{F}^{-1} \big(\eta_s\hat{f}\big)\big\|_{L^r},
	\end{equation}
	where the last inequality is a consequence of \cite{riv}. The proof will be completed if we
	show
	$$
	\big\|\mathcal{F}^{-1} \big(\eta_s\hat{f}\big)
	\big\|_{L^r}\lesssim \big\|\mathcal{F}^{-1} \big(\eta_s\hat{f}\big)\big\|_{\ell^r}.
	$$
	For this purpose we use \eqref{eq:14} from Lemma \ref{lem:2}. Indeed,
	\begin{multline*}
		\big\|\mathcal{F}^{-1} \big(\eta_s\hat{f}\big)\big\|_{L^r}^r
		=
		\sum_{j\in\ZZ}\int_{0}^{1} \big|\mathcal{F}^{-1} \big(\eta_s\hat{f}\big)(x+j)\big|^rdx\\
		\leq
		2^{r-1}\big\|\mathcal{F}^{-1} \big(\eta_s\hat{f}\big)\big\|_{\ell^r}^r
		+ 2^{r-1}\sum_{j\in\ZZ}\int_{0}^{1} \big|\mathcal{F}^{-1}
		\big(\eta_s\hat{f}\big)(x+j)-\mathcal{F}^{-1} \big(\eta_s\hat{f}\big)(j)\big|^rdx\\
		=
		2^{r-1}\big\|\mathcal{F}^{-1} \big(\eta_s\hat{f}\big)\big\|_{\ell^r}^r
		+2^{r-1}\int_{0}^{1} \bigg\|\int_{-\frac{1}{2}}^{\frac{1}{2}} e^{-2\pi i \xi j}
		\big(1-e^{-2\pi i \xi x}\big)\eta_s(\xi)\hat{f}(\xi)d\xi \bigg\|_{\ell^r(j)}^rdx\\
		\leq
		2^{r-1}\big\|\mathcal{F}^{-1} \big(\eta_s\hat{f}\big)\big\|_{\ell^r}^r
		+2^{r-1}\int_{0}^{1} \bigg\|\int_{-\frac{1}{2}}^{\frac{1}{2}} e^{-2\pi i \xi j}
		\big(1-e^{-2\pi i \xi x}\big)\eta_{s-1}(\xi)d\xi\bigg\|_{\ell^1(j)}^r
		\big\|\mathcal{F}^{-1} \big(\eta_s\hat{f}\big)\big\|_{\ell^r}^rdx\\
		\lesssim
		\big\|\mathcal{F}^{-1} \big(\eta_s\hat{f}\big)\big\|_{\ell^r}^r.
	\end{multline*}
	This finishes the proof of the proposition.
\end{proof}

\begin{theorem}
	\label{th:3}
	For each $r > 1$ there are $\delta_r > 0$ and $C_r > 0$ such that
	$$
	\Big\lVert
	\sup_{k \in \NN}
	\big\lvert
	\sum_{j = 0}^k \mathcal{F}^{-1} (\nu^s_j \hat{f})
	\big\rvert
	\Big\rVert_{\ell^r}
	\leq
	C_r 2^{-\delta_r s} \norm{f}_{\ell^r}
	$$
	for all $f \in \ell^r(\ZZ)$.
\end{theorem}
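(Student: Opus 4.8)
The plan is to disentangle the two mechanisms hidden in $\nu^s_j$: on one hand the truncated singular integral together with its maximal function, which can be handled on every $\ell^r$ with only a subpolynomial loss in $2^s$; on the other hand the arithmetic coefficients $\mu(q)/\varphi(q)$, which produce the gain $2^{-\delta_r s}$, but only after an orthogonality argument over the denominators. The two inputs are then combined by interpolation. To begin, summing the series in $j$ and recalling $\Psi_k=\sum_{j=0}^k\Phi_j$, we have
$$
\sum_{j=0}^{k}\nu^s_j(\xi)=\sum_{q\in[2^s,2^{s+1})}\frac{\mu(q)}{\varphi(q)}\,V^q_k(\xi),
\qquad
V^q_k(\xi)=\sum_{a\in A_q}\Psi_k(\xi-a/q)\,\eta_s(\xi-a/q),
$$
and, exactly as at the beginning of the proof of Proposition \ref{prop:2}, the bumps $\eta_s(\cdot-a/q)$, $a/q\in\mathscr R_s$, have pairwise disjoint supports (here it is used that $A>32$). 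The crux is the single--denominator estimate: for every $\epsilon>0$ and every $q\in[2^s,2^{s+1})$,
\begin{equation}
	\label{eq:keyq}
	\norm{\sup_{k\in\NN}\abs{\mathcal F^{-1}\big(V^q_k\hat f\big)}}_{\ell^r}\lesssim_{r,\epsilon}q^{\epsilon}\,\norm{f}_{\ell^r}.
\end{equation}

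To prove \eqref{eq:keyq} I would apply the M\"obius inversion formula \eqref{eqm:4} to the sum over $a$: writing $F_k=\Psi_k\eta_s$ one gets $V^q_k=\sum_{d\mid q}\mu(q/d)\,G^d_k$ with $G^d_k=\sum_{a=1}^{d}F_k(\cdot-a/d)$, hence $\sup_k\abs{\mathcal F^{-1}(V^q_k\hat f)}\le\sum_{d\mid q}\sup_k\abs{\mathcal F^{-1}(G^d_k\hat f)}$. For a fixed divisor $d$ of $q$ we have $d<2^{s+1}<A^{s+1}$, so the translates of $\operatorname{supp}F_k$ by $a/d$, $a=1,\dots,d$, are disjoint, $G^d_k$ is $1/d$--periodic, and a direct computation (equivalently, \eqref{eqm:4} applied on the kernel side) identifies $\mathcal F^{-1}(G^d_k\hat f)$ with the convolution of $f$ with the sampled kernel $m\mapsto d\,\kappa_k(dm)$, where $\kappa_k=\mathcal F^{-1}(\Psi_k\eta_s)$. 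Decomposing $\ZZ$ into residue classes modulo $d$ and using the identity $d\,\kappa_k(dm)=\mathcal F^{-1}\big((\Psi_k\eta_s)(\,\cdot/d)\big)(m)$, one recognizes $\sup_k$ of the convolution with $d\kappa_k(d\,\cdot)$ as the maximal truncated singular integral attached to the rescaled Calder\'on--Zygmund kernel $\widetilde K^{(d)}(y)=dK(dy)$ and the cut--off $\eta(A^{s+1}d^{-1}\cdot)$, the latter being supported in an interval of length $dA^{-s-1}<1$. Since $\widetilde K^{(d)}$ satisfies \eqref{eq:46} and \eqref{eq:47} uniformly in $d$, the argument of Proposition \ref{prop:3} (together with Lemmas \ref{lem:1} and \ref{lem:2}) yields $\norm{\sup_k\abs{\mathcal F^{-1}(G^d_k\hat f)}}_{\ell^r}\lesssim_r\norm{f}_{\ell^r}$, uniformly in $d$; summing over the $d(q)$ divisors of $q$ and invoking \eqref{eq:6} gives \eqref{eq:keyq}. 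Note also that, since $V^q_k\hat f=V^q_k\,\widehat{\mathcal F^{-1}(E^q_s\hat f)}$ with $E^q_s=\sum_{a\in A_q}\eta_{s-1}(\cdot-a/q)$ equal to $1$ on $\operatorname{supp}V^q_k$, \eqref{eq:keyq} remains valid with $\norm{\mathcal F^{-1}(E^q_s\hat f)}_{\ell^r}$ in place of $\norm{f}_{\ell^r}$.

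It remains to sum \eqref{eq:keyq} in $q$, and here the cases $r\ne2$ and $r=2$ diverge. For $r\ne2$ the triangle inequality and \eqref{eq:5} already suffice: for every $\epsilon>0$,
$$
\norm{\sup_{k}\Big\lvert\textstyle\sum_{j\le k}\mathcal F^{-1}(\nu^s_j\hat f)\Big\rvert}_{\ell^r}
\le\sum_{q\in[2^s,2^{s+1})}\frac{1}{\varphi(q)}\,\norm{\sup_k\abs{\mathcal F^{-1}(V^q_k\hat f)}}_{\ell^r}
\lesssim_{r,\epsilon}\sum_{q\in[2^s,2^{s+1})}\frac{q^{\epsilon}}{\varphi(q)}\,\norm{f}_{\ell^r}
\lesssim_{r,\epsilon}2^{\epsilon s}\,\norm{f}_{\ell^r}.
$$
For $r=2$ this loses too much, and one must use orthogonality. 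I would apply, pointwise in $n$ and $k$, the weighted Cauchy--Schwarz inequality $\abs{\sum_q c_q g_q}^{2}\le\big(\sum_q\abs{c_q}^{2}q^{2\epsilon}\big)\big(\sum_q q^{-2\epsilon}\abs{g_q}^{2}\big)$ with $c_q=\mu(q)/\varphi(q)$ and $g_q=\mathcal F^{-1}(V^q_k\hat f)(n)$, then move the supremum inside the second factor and sum in $n$. By the localized form of \eqref{eq:keyq}, Plancherel, and the disjointness of the arcs (which, since $A^{-s}<2^{-2s-2}$, forces $\sum_{q\in[2^s,2^{s+1})}\abs{E^q_s(\xi)}^{2}\le1$), the second factor contributes $\sum_q q^{-2\epsilon}\norm{\sup_k\abs{\mathcal F^{-1}(V^q_k\hat f)}}_{\ell^2}^{2}\lesssim_\epsilon\norm{f}_{\ell^2}^{2}$, whereas $\sum_{q\in[2^s,2^{s+1})}\abs{\mu(q)/\varphi(q)}^{2}q^{2\epsilon}\lesssim_\epsilon2^{-s(1-4\epsilon)}$ by \eqref{eq:5}; hence $\norm{\sup_k\abs{\sum_{j\le k}\mathcal F^{-1}(\nu^s_j\hat f)}}_{\ell^2}\lesssim2^{-\delta_2 s}\norm{f}_{\ell^2}$ for some $\delta_2>0$. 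Finally, linearizing the supremum and interpolating this $\ell^2$ bound against the $\ell^p$ bound above — with $p$ chosen strictly between $1$ and $2$ when $r<2$, strictly between $2$ and $\infty$ when $r>2$, and with $\epsilon$ taken small enough depending on $r$ — produces the desired exponent $\delta_r>0$; the case $r=2$ is already covered.

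The step I expect to be the main obstacle is the $\ell^2$ estimate with genuine exponential decay. One has to process the supremum over $k$ while simultaneously retaining the full $\varphi(q)^{-1}$--saving coming from all $\asymp 2^s$ admissible denominators; a crude summation in $q$ destroys it, which is precisely what forces the orthogonality/weighted Cauchy--Schwarz argument. The two supporting ingredients — the sampling identity for $G^d_k$ and the verification that the rescaled kernels $\widetilde K^{(d)}$ fall within the scope of Proposition \ref{prop:3} uniformly in $d$ — are routine but require some care with the scaling of the cut--offs.
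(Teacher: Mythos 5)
Your proposal is correct in outline and shares the paper's global architecture: a single‑denominator maximal estimate with only a $q^{\epsilon}$ loss valid on every $\ell^r$ (your \eqref{eq:keyq} is exactly the paper's \eqref{eqm:12}), a crude summation in $q$ giving the $2^{\epsilon s}$ bound \eqref{eq:4}, an $\ell^2$ refinement with genuine decay obtained from $1/\varphi(q)$, disjointness of the $\eta_{s-1}$ bumps and Plancherel (your weighted Cauchy--Schwarz with weights $q^{\pm 2\epsilon}$ is just a reorganization of the paper's Cauchy--Schwarz leading to \eqref{eq:23}, with your $E^q_s$ playing the role of the paper's $G_q$), and finally interpolation. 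The genuine difference is how the per‑denominator estimate is proved. The paper, after the same M\"obius inversion, decomposes $\ZZ$ into residue classes modulo $q$ at the \emph{fixed} scale $\eta_s$ and proves the arithmetic‑progression bound \eqref{eq:16} by the $J_l\le J_{l'}+{}$small‑error comparison, which rests on Lemma \ref{lem:1} and the equidistribution Lemma \ref{lem:2} (hence the reduction to $s\ge r$ via Proposition \ref{prop:3}); this is precisely the ingredient that repairs Wierdl's argument. You instead use, for each divisor $d\mid q$, the exact sampling identity $d\kappa_k(dm)=\mathcal F^{-1}\big((\Psi_k\eta_s)(\cdot/d)\big)(m)$ to identify the operator on each residue class mod $d$ with the maximal truncated singular integral of the rescaled kernel $dK(d\cdot)$ with a dilated cut‑off — a Magyar--Stein--Wainger‑type transference. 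This avoids Lemma \ref{lem:2} and the residue comparison altogether (note that your parenthetical appeal to Lemma \ref{lem:2} inside that step is not actually needed, and would not apply without $s\ge r$), and it keeps the count of residue classes exact, so no spurious power of $q$ appears; what it costs is that Proposition \ref{prop:3} and Lemma \ref{lem:1} must be re‑proved uniformly over all dilations $A^{s+1}/d\ge 1$ (uniform Calder\'on--Zygmund constants of $dK(d\cdot)$, cut‑off bandwidth $dA^{-s-1}<1$ so the multiplier still lives on the torus, and the $\ell^r\leftrightarrow L^r$ transference at the new scale). These verifications are routine, as you say, but they are exactly where the details must be written out; with them supplied, your argument is a valid alternative to the paper's proof of \eqref{eqm:12}, and the remaining steps coincide with the paper's.
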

\begin{proof}
	Based on Proposition \ref{prop:3} we may assume $s \geq r$. Let $q \in [2^s, 2^{s+1})$ be
	fixed. Firstly, we are going to show that for every
	$\epsilon>0$ we have
	\begin{align}
	\label{eqm:12}
		\bigg\lVert
		\sup_{k \in \NN}
		\Big\lvert
		\sum_{a \in A_q}
		\mathcal{F}^{-1} \big(\Psi_k(\cdot-a/q) \eta_s(\cdot-a/q) \hat{f}\big)
		\Big\rvert
	\bigg\rVert_{\ell^r}\leq C_{\epsilon} q^{\epsilon}\|f\|_{\ell^r}.
	\end{align}
	By M\"obius inversion formula \eqref{eqm:4} we see that
	\begin{equation}
		\label{eqm:13}
		\sum_{a \in A_q} \mathcal{F}^{-1}
		\big(\Psi_k(\cdot-a/q) \eta_s(\cdot-a/q) \hat{f}\big)(x)
		=\sum_{b \mid q} \mu(q/b) \sum_{a=1}^b e^{-2\pi i ax/b} \mathcal{F}^{-1}
		\big(\Psi_k \eta_s \hat{f}(\cdot+a/b)\big)(x).
	 \end{equation}
	Moreover, for $x \equiv l \pmod{q}$ we may write
	\begin{equation}
		\label{eq:11}
		\sum_{a=1}^b e^{-2\pi i a x/b}
		\mathcal{F}^{-1}\big(\Psi_k \eta_s \hat{f} (\cdot + a/b)\big)(x)
		=\mathcal{F}^{-1}\big(\Psi_k \eta_s F_b(\cdot\ ; l)\big) (x)
	\end{equation}
	where for $b \mid q$ we have set
	$$
	F_b(\xi; l) = \sum_{a=1}^b \hat{f}(\xi + a/b) e^{-2\pi i l a/b}.
	$$
	Therefore, by formula \eqref{eqm:13} and \eqref{eq:11} we have
	$$
	\bigg\lVert
	\sup_{k \in \NN}
	\Big\lvert
	\sum_{a \in A_q}
	\mathcal{F}^{-1} \big(\Psi_k(\cdot-a/q) \eta_s(\cdot-a/q) \hat{f}\big)
	\Big\rvert
	\bigg\rVert_{\ell^r}
	\leq
	\sum_{b \mid q}
	\bigg(
	\sum_{l=1}^q
	\Big\lVert
	\sup_{k \in \NN}
	\big\lvert
	\mathcal{F}^{-1} \big(\Psi_k \eta_s F_b(\cdot\ ; l) \big) (q j + l)
	\big\rvert
	\Big\rVert_{\ell^r(j)}^r
	\bigg)^{1/r}.
	$$
	Thus in view of \eqref{eq:6} it will suffice to prove that
	\begin{align}
		\label{eqm:14}
	  	\bigg(
		\sum_{l=1}^q
		\Big\lVert
		\sup_{k \in \NN}
		\big\lvert
		\mathcal{F}^{-1} \big(\Psi_k \eta_s F_b(\cdot\ ; l) \big) (q j + l)
		\big\rvert
		\Big\rVert_{\ell^r(j)}^r
		\bigg)^{1/r}
		\leq C_r \norm{f}_{\ell^r}
	\end{align}
	where the constant does not depend on $b$. For the proof let us fix $f \in \ell^r(\ZZ)$
	and consider a sequence $(J_1, J_2, \ldots, J_q)$ defined by
	$$
	J_l =
	\Big\lVert
	\sup_{k \in \NN} \big\lvert \mathcal{F}^{-1}
	\big(\Psi_k \eta_s \hat{f} \big)(q j + l) \big\rvert
	\Big\rVert_{\ell^r(j)}.
	$$
	By Proposition \ref{prop:3}, we have
	$$
	J_1^r + J_2^r + \ldots + J_q^r =I^r=
	\Big\lVert
	\sup_{k \in \NN}
	\big\lvert \mathcal{F}^{-1} \big(\Psi_k  \eta_s \hat{f} \big) \big \rvert
	\Big\rVert_{\ell^r}^r
	\lesssim
	\big\lVert
	\mathcal{F}^{-1} \big(\eta_s \hat{f}\big)
	\big\rVert_{\ell^r}^r.
	$$
	Also for any $l, l' \in \{1,2,\ldots,q\}$
	\begin{multline*}
		\bigg\lVert
		\sup_{k \in \NN}
		\bigg\lvert
		\int_{-\frac{1}{2}}^{\frac{1}{2}} e^{-2\pi i \xi (q j + l)}
		\big(1 - e^{2\pi i \xi (l-l')}\big) \Psi_k(\xi)\eta_s(\xi) \hat{f}(\xi) d\xi
		\bigg\rvert
		\bigg\rVert_{\ell^r(j)}\\
		\lesssim
		\bigg\lVert
		\int_{-\frac{1}{2}}^{\frac{1}{2}} e^{-2\pi i \xi j} \big(1 - e^{2\pi i \xi (l-l')}\big)
		\eta_s(\xi) \hat{f}(\xi) d\xi
		\bigg\rVert_{\ell^r(j)}.
	\end{multline*}
	Since $\eta_s = \eta_s \eta_{s-1}$, by Minkowski's inequality and Lemma \ref{lem:1} we obtain
	that the last expression can be dominated by
	$$
	\bigg\lVert
	\int_{-\frac{1}{2}}^{\frac{1}{2}}
	e^{-2\pi i \xi j} \big(1 - e^{2\pi i \xi (l-l')}\big) \eta_{s-1}(\xi) d\xi
	\bigg\rVert_{\ell^1(j)}
	\big\lVert
	\mathcal{F}^{-1} \big(\eta_s \hat{f}\big)
	\big\rVert_{\ell^r}
	\leq
	C q A^{-s}
	\big\lVert
	\mathcal{F}^{-1} \big(\eta_s \hat{f}\big)
	\big\rVert_{\ell^r}.
	$$
	Therefore, by \eqref{eq:18}
	$$
	J_l \leq J_{l'} +
	q^{-1} \big\lVert \mathcal{F}^{-1} \big(\eta_s \hat{f}\big) \big\rVert_{\ell^r}.
	$$
	Summing up over all $l' \in \{1, 2, \ldots, q\}$ we obtain
	$$
	q J_l^r \leq 2^{r-1} I^r + C 2^{r-1} q^{1-r}
	\big\lVert \mathcal{F}^{-1} \big(\eta_s \hat{f}\big) \big\rVert_{\ell^r}^r
	\lesssim
	\big\lVert \mathcal{F}^{-1} \big(\eta_s \hat{f}\big) \big\rVert_{\ell^r}^r.
	$$
	Eventually, by Lemma \ref{lem:2} we conclude
	\begin{equation}
		\label{eq:16}
		\Big\lVert
		\sup_{k \in \NN} \big\lvert \mathcal{F}^{-1} \big(\Psi_k \eta_s \hat{f} \big)(q j + l)
		\big\rvert
		\Big\rVert_{\ell^r(j)}
		\lesssim
		\big\lVert \mathcal{F}^{-1} \big(\eta_s \hat{f}\big)(qj+l) \big\rVert_{\ell^r(j)}.
	\end{equation}
	Next, we resume the analysis of \eqref{eqm:14}. Using \eqref{eq:16} we get
	$$
	\bigg(
	\sum_{l=1}^q
	\Big\lVert
	\sup_{k \in \NN}
	\big\lvert
	\mathcal{F}^{-1} \big(\Psi_k \eta_s F_b(\cdot\ ; l) \big) (q j + l)
	\big\rvert
	\Big\rVert_{\ell^r(j)}^r
	\bigg)^{1/r}
	\lesssim
	\bigg(
	\sum_{l=1}^q
	\Big\lVert
	\mathcal{F}^{-1} \big(\eta_s F_b(\cdot\ ; l) \big) (q j + l)
	\Big\rVert_{\ell^r(j)}^r
	\bigg)^{1/r}.
	$$
	We observe that by the change of variables
	$$
	\mathcal{F}^{-1}\big(\eta_s F_b(\cdot\ ; l)\big)(qj+l)
	=\sum_{a=1}^b \mathcal{F}^{-1}\big(\eta_s(\cdot - a/b) \hat{f}\big)(qj+l).
	$$
	Thus by Minkowski's inequality
	$$
	\bigg(
	\sum_{l=1}^q
	\Big\lVert
	\mathcal{F}^{-1} \big(\eta_s F_b(\cdot\ ; l) \big) (q j + l)
	\Big\rVert_{\ell^r(j)}^r
	\bigg)^{1/r}
	\leq
	\Big\lVert
	\mathcal{F}^{-1} \Big(\sum_{a=1}^b \eta_s(\cdot - a/b) \Big)
	\Big\rVert_{\ell^1}
	\norm{f}_{\ell^r}.
	$$
	Since for $j \in \ZZ$
	$$
	\sum_{a=1}^b e^{-2\pi i j a/b} =
	\begin{cases}
		b & \text{if } b \mid j,\\
		0 & \text{otherwise}
	\end{cases}
	$$
	we conclude
	$$
	\Big\lVert
	\mathcal{F}^{-1} \Big(\sum_{a=1}^b \eta_s(\cdot - a/b) \Big)
	\Big\rVert_{\ell^1}
	=
	\Big\lVert
	\mathcal{F}^{-1} \eta_s(j)
	\sum_{a=1}^b e^{-2\pi i j a/b}
	\Big\rVert_{\ell^1(j)}
	=b \norm{\mathcal{F}^{-1} \eta_s(bj)}_{\ell^1(j)}.
	$$
	Now Lemma \ref{lem:1} and Lemma \ref{lem:2}
	imply
	$$
	b \norm{\mathcal{F}^{-1} \eta_s(bj)}_{\ell^1(j)}
	\lesssim \norm{\mathcal{F}^{-1} \eta_s}_{\ell^1}
	\lesssim 1.
	$$
	This completes the proof of \eqref{eqm:14}. Eventually, by \eqref{eq:5} and \eqref{eqm:12} we
	obtain that
	\begin{equation}
		\label{eq:4}
		\Big\lVert
		\sup_{k \in \NN}
		\Big\lvert
		\sum_{j=0}^k
		\mathcal{F}^{-1} (\nu_j^s \hat{f})
		\Big\rvert
		\Big\rVert_{\ell^r}
		\lesssim
		2^{\epsilon s} \norm{f}_{\ell^r}
	\end{equation}
	for any $\epsilon > 0$ and $s \in \NN$. If $r = 2$ we may refine the estimate \eqref{eq:4}
	(see also \cite{Bou1}). Let
    $$
	G_q(\xi) = \sum_{a \in A_q} \eta_{s-1}(\xi - a/q) \hat{f}(\xi).
	$$
    and note that
    \begin{align*}
    	\sum_{a \in A_q} \mathcal{F}^{-1} \big(\Psi_k(\cdot -a/q) \eta_s(\cdot-a/q) \hat{f} \big)
		=\sum_{a \in A_q} \mathcal{F}^{-1} \big(\Psi_k(\cdot - a/q) \eta_s(\cdot -a/q) G_q\big)
    \end{align*}
	since $\eta_s = \eta_s \eta_{s-1}$, and the supports of $\eta_s(\cdot -a/q)$'s are disjoint
	when $a/q$ varies. By \eqref{eqm:12} we have
	$$
	\Big\lVert
	\sup_{k \in \NN}
	\Big\lvert
	\sum_{a \in A_q}
	\mathcal{F}^{-1} (\Psi_k(\cdot -a/q) \eta_s(\cdot -a/q) G_q)
	\Big\rvert
	\Big\rVert_{\ell^2}
	\lesssim
	q^{\epsilon} \norm{\mathcal{F}^{-1} G_q}_{\ell^2}
	$$
	whereas by \eqref{eq:5}, we have
	$$
	\Big\lVert
	\sup_{k \in \NN}
	\Big\lvert
	\sum_{j=0}^k
	\mathcal{F}^{-1}\big(\nu^s_j \hat{f}\big)
	\Big\rvert
	\Big\rVert_{\ell^2}
	\leq
	\sum_{q = 2^s}^{2^{s+1}-1}q^{-1+\epsilon}\Big\lVert
	\sup_{k \in \NN}
	\Big\lvert
	\sum_{a \in A_q}
	\mathcal{F}^{-1} (\Psi_k(\cdot -a/q) \eta_s(\cdot -a/q) \hat{f})
	\Big\rvert
	\Big\rVert_{\ell^2}.
	$$
	These two bounds yield
	$$
	\Big\lVert
	\sup_{k \in \NN}
	\Big\lvert
	\sum_{j=0}^k
	\mathcal{F}^{-1}\big(\nu^s_j \hat{f}\big)
	\Big\rvert
	\Big\rVert_{\ell^2}
	\lesssim
	\sum_{q = 2^s}^{2^{s+1}-1} q^{-1+2\epsilon} \norm{\mathcal{F}^{-1} G_q}_{\ell^2}\\
	\lesssim
	2^{-s/2 + 2\epsilon s}
	\Big( \sum_{a/q \in \mathscr{R}_s}
	\big\lVert
	\mathcal{F}^{-1} \big(\eta_{s-1}(\cdot-a/q) \hat{f}\big)
	\big\rVert_{\ell^2}^2
	\Big)^{1/2},
	$$
    where the last estimate follows from Cauchy--Schwartz inequality and the definition of $G_q$.
	Eventually, by Plancherel's theorem we may write
	$$
	\sum_{a/q \in \mathscr{R}_s}
	\big\lVert \mathcal{F}^{-1}
	\big(\eta_{s-1}(\cdot-a/q) \hat{f}\big)
	\big\rVert_{\ell^2}^2
	=
	\sum_{a/q \in \mathscr{R}_s}
	\int_{\RR}
	\abs{\eta_{s-1}(\xi - a/q)}^2
	\big\lvert \hat{f}(\xi) \big\rvert^2
	d\xi
	$$
	which is majorized by $\norm{f}_{\ell^2}^2$. Thus for appropriately chosen $\epsilon>0$ we
	obtain
	\begin{equation}
		\label{eq:23}
		\Big\lVert
		\sup_{k \in \NN}
		\Big\lvert
		\sum_{j=0}^k
		\mathcal{F}^{-1}\big(\nu_j^s \hat{f}\big)
		\Big\rvert
		\Big\rVert_{\ell^2} \leq 2^{-s/4} \norm{f}_{\ell^2}.
	\end{equation}
	Next, for $r \neq 2$ we can use Marcinkiewicz interpolation theorem and interpolate between
	\eqref{eq:4} and \eqref{eq:23} to conclude the proof.
\end{proof}

\subsection{Maximal function}
\label{subsec:5}
We have gathered necessary tools to illustrate the proof of Theorem \ref{thm:2}. First, we
show the boundedness on $\ell^r(\ZZ)$ of the maximal function $T^*$.
\begin{theorem}
	\label{th:4}
	The maximal function $T^*$ is bounded on $\ell^r(\ZZ)$ for each $1 < r < \infty$.
\end{theorem}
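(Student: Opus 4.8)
The plan is to decompose the multiplier of $T_N$ into dyadic pieces and match each piece with the model multiplier $\nu_j$ controlled in Theorem \ref{th:3}. Write $T_N$ in terms of the multipliers $m_j$: for $\tau^k < N \le \tau^{k+1}$ one has $T_N f = \sum_{j=0}^{k} \mathcal{F}^{-1}(m_j \hat f) + (\text{error})$, where the error comes from the incomplete last block $\abs{p} \in (N, \tau^{k+1}]$ and is harmless because it is controlled by a single truncated piece, hence by the discrete Hardy--Littlewood maximal function (using $\abs{K(x)}\lesssim \abs{x}^{-1}$ and $\log\abs p$ on a dyadic-type block, which sums to $O(1)$). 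So it suffices to bound $\sup_{k\in\NN}\bigl\lvert \sum_{j=0}^k \mathcal{F}^{-1}(m_j\hat f)\bigr\rvert$ on $\ell^r$.

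First I would split $m_j = \nu_j + (m_j - \nu_j)$. For the error term, Proposition \ref{prop:2} gives $\norm{m_j - \nu_j}_{L^\infty} \lesssim_\alpha j^{-\alpha/4}$ with $\alpha$ as large as we please; combined with the trivial bound from \eqref{eq:39} (so that $m_j-\nu_j$ is also $O(1)$ and decays in $\xi$ away from rationals) this is a Fourier multiplier operator which, after choosing $\alpha$ large, is dominated termwise by a summable-in-$j$ family of $\ell^r$-bounded operators; one can run a standard square-function or direct summation argument (as in Bourgain's work) to get $\ell^r$ boundedness of $\sup_k\bigl\lvert\sum_{j\le k}\mathcal{F}^{-1}((m_j-\nu_j)\hat f)\bigr\rvert$ with constant $C_r$. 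The point is that the partial sums telescope and the $j^{-\alpha/4}$ decay beats any polynomial loss from interpolating an $\ell^2$ estimate (via Plancherel) against the crude $\ell^1\to\ell^1$/$\ell^\infty\to\ell^\infty$ bounds on each $\mathcal{F}^{-1}((m_j-\nu_j))$.

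For the main term, I would sum Theorem \ref{th:3} over $s\in\NN$: since $\nu_j = \sum_{s\in\NN}\nu_j^s$,
\begin{equation*}
	\Bigl\lVert \sup_{k\in\NN}\bigl\lvert \textstyle\sum_{j=0}^k \mathcal{F}^{-1}(\nu_j\hat f)\bigr\rvert\Bigr\rVert_{\ell^r}
	\le \sum_{s\in\NN}\Bigl\lVert \sup_{k\in\NN}\bigl\lvert \textstyle\sum_{j=0}^k \mathcal{F}^{-1}(\nu_j^s\hat f)\bigr\rvert\Bigr\rVert_{\ell^r}
	\le \sum_{s\in\NN} C_r 2^{-\delta_r s}\norm{f}_{\ell^r} \lesssim_r \norm{f}_{\ell^r},
\end{equation*}
which is exactly the geometric summability that Theorem \ref{th:3} was designed to produce. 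Combining the two parts and adding back the Hardy--Littlewood bound for the boundary error term yields $\norm{T^* f}_{\ell^r}\lesssim_r \norm{f}_{\ell^r}$.

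The main obstacle is the error term $m_j - \nu_j$: one must be careful that the decay $j^{-\alpha/4}$ from Proposition \ref{prop:2} is genuinely leveraged on $\ell^r$ and not just on $\ell^2$, since on $\ell^r$ we only have crude control of the individual multiplier operators. The cleanest route is to obtain a favorable $\ell^2$ bound from Plancherel (where $\norm{m_j-\nu_j}_{L^\infty}$ enters directly), a crude $\ell^\infty\to\ell^\infty$ (hence $\ell^1\to\ell^1$) bound from the kernel size $\abs{K_j(p)}\log\abs p \lesssim \tau^{-j}\log\tau^{j}$ summed over the dyadic block (giving $O(j)$, or $O(1)$ after using cancellation), and interpolate; choosing $\alpha$ large enough that $\alpha/4$ dominates the interpolation exponent loss then makes the sum over $j$ converge. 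Everything else is bookkeeping with the triangle inequality and the results already established.
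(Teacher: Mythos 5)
Your overall architecture coincides with the paper's: reduce $T^*$ to $\sup_k\bigl\lvert\sum_{j=0}^k\mathcal{F}^{-1}(m_j\hat f)\bigr\rvert$ plus a boundary error, handle the main term by summing Theorem \ref{th:3} over $s$, and handle $m_j-\nu_j$ by interpolating the $\ell^2$ decay of Proposition \ref{prop:2} against a crude $\ell^r$ bound and summing in $j$. However, there is a genuine gap in your treatment of the boundary error. The incomplete last block gives, for $\tau^k\le N<\tau^{k+1}$, a term dominated by $\tau^{-k}\sum_{p\in\pm\PP_{\tau^{k+1}}}\abs{f(n-p)}\log\abs{p}$, i.e.\ by the Bourgain--Wierdl average $A_{\tau^{k+1}}\abs{f}(n)$, and its supremum in $k$ is the maximal function $\mathcal{M}$ along the primes. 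This is \emph{not} pointwise controlled by the discrete Hardy--Littlewood maximal function: testing on $f=\delta_0$ at a point $n=p_0$ with $p_0$ a prime near $\tau^{k+1}$ shows the comparison loses a factor of order $\log N$, which is unbounded in $k$. What is actually needed is the $\ell^r(\ZZ)$-boundedness of $\mathcal{M}$ itself, a nontrivial theorem of Bourgain--Wierdl type; the paper invokes it explicitly (citing \cite{Bou} and its own Appendix, which exists precisely because Wierdl's published proof of this fact has a gap). So the error term is not ``harmless bookkeeping'' but requires this substantive input.

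A smaller point: in your interpolation step the crude $\ell^r$ bound must hold for the full difference $m_j-\nu_j$, and your sketch only controls the $m_j$ piece via its kernel. Bounding the kernel $\ell^1$ norm of $\nu_j=\sum_s\nu_j^s$ directly is not straightforward (the bumps $\Phi_j(\cdot-a/q)\eta_s(\cdot-a/q)$ proliferate with $s$); the paper instead gets $\bigl\lVert\mathcal{F}^{-1}(\nu_j\hat f)\bigr\rVert_{\ell^r}\lesssim\norm{f}_{\ell^r}$ by dominating the single term $\nu_j^s$ by a difference of the partial sums controlled in Theorem \ref{th:3} and summing the $2^{-\delta_r s}$ factors, and gets the $m_j$ piece by positivity of the kernel for $f\ge 0$. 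With the error term handled via $\mathcal{M}$ and the crude bound for $\nu_j$ supplied from Theorem \ref{th:3} as above, your argument becomes the paper's proof.
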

\begin{proof}
	Let us observe that for a non-negative function $f$
	$$
	T^* f(n) \lesssim
	\sup_{k \in \NN}
	\Big\lvert
	\sum_{j = 0}^k \mathcal{F}^{-1}\big(m_j \hat{f}\big)(n)
	\Big\rvert
	+
	\mathcal{M} f(n)
	$$
	where $\mathcal{M} f = \sup_{N \in \NN} \abs{A_N f}$ is a maximal function corresponding with
	Bourgain--Wierdl's averages
	$$
	A_N f(n) = N^{-1} \sum_{p \in \pm \PP_N} f(n-p) \log \abs{p}.
	$$
	Indeed, suppose $\tau^k \leq N < \tau^{k+1}$ for $k \in \NN$. Then
	$$
	T_N f(n) =
	\sum_{j=0}^k \sum_{p \in \pm \PP} f(n-p) K_j(p) \log \abs{p}
	-\sum_{p \in \pm R_N} f(n-p) K(p) \log \abs{p}.
	$$
	where $R_N = \PP \cap (N, \tau^{k+1})$. Therefore, by \eqref{eq:46}, we see
	$$
	\Big\lvert
	\sum_{p \in R_N} f(n-p) K(p) \log \abs{p}
	\Big\rvert
	\lesssim
	\tau^{-k}
	\sum_{p \in \pm \PP_{\tau^{k+1}}} f(n-p) \log |p| \lesssim A_{\tau^{k+1}} f(n).
	$$
	Since the maximal function $\mathcal{M}$ is bounded on $\ell^r(\ZZ)$ for any $r > 1$
	(see \cite{Bou} or Appendix \ref{apx:1}) thus we have reduced the boundedness of $T^*$ to
	proving
	$$
	\Big\lVert
	\sup_{k \in \NN}
	\Big\lvert
	\sum_{j=0}^k
	\mathcal{F}^{-1}
	\big(m_j \hat{f} \big)
	\Big\rvert
	\Big\rVert_{\ell^r} \lesssim \norm{f}_{\ell^r}.
	$$
	Let us consider $f \in \ell^r(\ZZ)$ for $r>1$. By Theorem \ref{th:3} we know that for
	$j\in\NN$
	\begin{multline*}
		\big\lVert
		\mathcal{F}^{-1} \big(\nu_j \hat{f} \big)
		\big\rVert_{\ell^r}
		\leq
		\sum_{s \in \NN}
		\big\lVert
		\mathcal{F}^{-1} \big(\nu_j^s \hat{f}\big)
		\big\rVert_{\ell^r}
		\leq
		\sum_{s \in \NN}\Big\|\sup_{k\in\NN}\Big|\sum_{j=0}^k
		\mathcal{F}^{-1} \big(\nu_j^s \hat{f}\big)-\sum_{j=0}^{k-1}
		\mathcal{F}^{-1} \big(\nu_j^s \hat{f}\big)\Big|
		\Big\|_{\ell^r}\\
		\lesssim\sum_{s \in \NN}\Big\|\sup_{k\in\NN}\Big|\sum_{j=0}^k
		\mathcal{F}^{-1} \big(\nu_j^s \hat{f}\big)\Big|
		\Big\|_{\ell^r}
		\lesssim \sum_{s \in \NN} 2^{-\delta s} \norm{f}_{\ell^r}
		\lesssim \norm{f}_{\ell^r}.
	\end{multline*}
	If $f$ is non-negative then
	$$
	\Big\lvert
	\sum_{p \in \pm \PP} f(x-p) K_j(p) \log \abs{p}
	\Big\rvert
	\leq
	\tau^{-j+1}
	\sum_{p \in \pm \PP_{\tau^{j+1}}} f(x-p) \log \abs{p}
	$$
	thus
	$$
	\big\lVert \mathcal{F}^{-1} \big(m_j \hat{f}\big) \big\rVert_{\ell^r}
	\lesssim \tau^{-j} \Big(\sum_{p \in \PP_{\tau^{j+1}}} \log p \Big) \norm{f}_{\ell^r}
	\lesssim \norm{f}_{\ell^r}.
	$$
	Hence,
	\begin{equation}
		\label{eq:24}
		\Big\lVert
		\mathcal{F}^{-1} \big((m_j - \nu_j)\hat{f} \big)
		\Big\rVert_{\ell^r}
		\lesssim \norm{f}_{\ell^r}.
	\end{equation}
	For $r = 2$ we use Proposition \ref{prop:2} to get
	\begin{equation}
		\label{eq:25}
		\Big\lVert
		\mathcal{F}^{-1}\big((m_j - \nu_j) \hat{f} \big)
		\Big\rVert_{\ell^2}
		\leq
		\norm{m_j - \nu_j}_{L^\infty} \norm{f}_{\ell^2}
		\lesssim j^{-\alpha} \norm{f}_{\ell^2}
	\end{equation}
	for any $\alpha>0$ big enough. If $r \neq 2$ we apply Marcinkiewicz interpolation theorem
	to interpolate between \eqref{eq:24} and \eqref{eq:25} and obtain
	\begin{equation}
		\label{eq:26}
		\Big\lVert
		\mathcal{F}^{-1} \big((m_j - \nu_j)\hat{f} \big)
		\Big\rVert_{\ell^r}
		\lesssim j^{-2} \norm{f}_{\ell^r}.
	\end{equation}
	Since
	$$
	\Big\lVert
	\sup_{k \in \NN}
	\Big\lvert
	\sum_{j=0}^k
	\mathcal{F}^{-1} \big((m_j - \nu_j)\hat{f} \big)
	\Big\rvert
	\Big\rVert_{\ell^r}
	\leq
	\sum_{j \in \NN}
	\Big\lVert
	\mathcal{F}^{-1} \big((m_j - \nu_j) \hat{f} \big)
	\Big\rVert_{\ell^r}
	$$
	by \eqref{eq:26} and Theorem \ref{th:3} we finish the proof.
\end{proof}
Next, we demonstrate the pointwise convergence of $\seq{T_N}{N \in \NN}$.
\begin{proposition}
	If $f \in \ell^r(\ZZ)$, $1 < r < \infty$ then for every $n \in \ZZ$
	\begin{align}
		\label{eqm:15}
		 \lim_{N \rightarrow \infty} T_N f(n) = T f (n)
	\end{align}
	and $T$ is bounded on $\ell^r(\ZZ)$.
\end{proposition}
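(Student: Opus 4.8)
The plan is to derive both assertions from the maximal estimate of Theorem~\ref{th:4} via a standard density argument; the substantive work has already been carried out in Theorems~\ref{th:3} and~\ref{th:4}, so what remains is essentially bookkeeping.

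First I would establish pointwise convergence on the dense subspace of finitely supported functions. If $f\colon\ZZ\to\CC$ has finite support, then $f(n-p)\neq 0$ only for the finitely many primes $p$ with $n-p\in\supp f$; hence for all sufficiently large $N$ the sum $T_Nf(n)=\sum_{p\in\pm\PP_N}f(n-p)K(p)\log\abs{p}$ stabilizes and equals the finite sum $Tf(n)=\sum_{p\in\pm\PP}f(n-p)K(p)\log\abs{p}$. Thus $\lim_{N\to\infty}T_Nf(n)=Tf(n)$ for every $n\in\ZZ$ and every such $f$.

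Next I would pass to arbitrary $f\in\ell^r(\ZZ)$ through the oscillation functional
$$
Df(n)=\limsup_{M,N\to\infty}\abs{T_Mf(n)-T_Nf(n)}.
$$
Because $\abs{T_Mf(n)-T_Nf(n)}\le 2T^*f(n)$ and $T^*f\in\ell^r(\ZZ)$ by Theorem~\ref{th:4}, the quantity $T^*f(n)$ is finite at every $n$, so $Df$ is finite everywhere. Linearity of the truncations yields the subadditivity $D(g+h)\le Dg+Dh$, while trivially $Dh\le 2T^*h$. Given $\epsilon>0$, I would choose a finitely supported $g$ with $\norm{f-g}_{\ell^r}<\epsilon$ and set $h=f-g$; by the first step $Dg\equiv 0$, so $Df(n)\le 2T^*h(n)$ for every $n$, hence $\norm{Df}_{\ell^r}\le 2\norm{T^*h}_{\ell^r}\lesssim_r\epsilon$. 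Letting $\epsilon\to 0$ forces $Df\equiv 0$, i.e. $\big(T_Nf(n)\big)_{N\in\NN}$ is Cauchy for each $n$; I would then define $Tf(n)$ to be its limit, consistently with the definition of $Tf$ on finitely supported functions. Boundedness is immediate afterwards: $\abs{T_Nf(n)}\le T^*f(n)$ for all $N$ gives $\abs{Tf(n)}\le T^*f(n)$ in the limit, so $\norm{Tf}_{\ell^r}\le\norm{T^*f}_{\ell^r}\lesssim_r\norm{f}_{\ell^r}$; thus $T$ extends from finitely supported functions to a bounded operator on $\ell^r(\ZZ)$ realized as the everywhere-defined pointwise limit of its truncations, and one checks that this agrees with the $\ell^r$-closure of $T$ since $\norm{Tf-Tf_k}_{\ell^r}\le\norm{T^*(f-f_k)}_{\ell^r}\to 0$ whenever finitely supported $f_k\to f$ in $\ell^r$.

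There is no serious obstacle here beyond the maximal bound already proved; the one point deserving care is that for a general $f\in\ell^r$ the operator $Tf$ cannot be defined by the naive series $\sum_p f(n-p)K(p)\log\abs{p}$, which fails to converge absolutely because $\abs{K(p)\log\abs{p}}\lesssim\abs{p}^{-1}\log\abs{p}$ is not summable over the primes --- it is exactly the cancellation \eqref{eq:47} encoded in the symmetric truncations $T_N$ that makes the limit exist, and the whole content of the argument is to transfer that cancellation from $\ell^2$ (Proposition~\ref{prop:2}) to $\ell^r$ (Theorems~\ref{th:3} and~\ref{th:4}).
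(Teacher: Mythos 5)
Your argument is correct, but it is not the paper's argument, and the comparison is instructive. You run the standard density scheme: pointwise convergence is trivial for finitely supported $f$, and for general $f\in\ell^r$ you control the oscillation $Df(n)=\limsup_{M,N\to\infty}\abs{T_Mf(n)-T_Nf(n)}$ by $2T^*(f-g)$ with $g$ finitely supported, invoking the maximal bound of Theorem \ref{th:4}; boundedness of $T$ then follows from $\abs{Tf}\le T^*f$. The paper instead estimates the tail directly: writing $T^Nf(n)=\sum_{p\in\pm\PP,\,\abs{p}>N}f(n-p)K(p)\log\abs{p}$, H\"older's inequality together with $\abs{K(p)\log\abs p}\le p^{-1}\log p$ gives
\[
\abs{T^Nf(n)}\le 2\Big(\sum_{p\in\PP,\;p>N}\big(p^{-1}\log p\big)^{r'}\Big)^{1/r'}\norm{f}_{\ell^r},
\]
and since $r'<\infty$ the series $\sum_p(p^{-1}\log p)^{r'}$ converges, so the tail tends to $0$ uniformly in $n$; this yields pointwise convergence for \emph{every} $f\in\ell^r$ with no density argument and no use of $T^*$, and identifies the limit as the absolutely convergent series defining $Tf$. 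The maximal function enters only afterwards, via Fatou's lemma, to get $\norm{Tf}_{\ell^r}\le\liminf_N\norm{T_Nf}_{\ell^r}\le\norm{T^*f}_{\ell^r}\lesssim_r\norm{f}_{\ell^r}$. Your route buys robustness (it would survive in situations where no absolute convergence is available), while the paper's route is shorter and gives the stronger conclusion that $T_Nf(n)\to Tf(n)$ with the limit realized as a pointwise absolutely convergent sum.

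One assertion in your closing remark is wrong and worth correcting: you claim that for general $f\in\ell^r$ the series $\sum_p f(n-p)K(p)\log\abs{p}$ fails to converge absolutely because $p^{-1}\log p$ is not summable over the primes. Non-summability of $p^{-1}\log p$ is only an obstruction when tested against $\ell^\infty$; for $f\in\ell^r$ with $1<r<\infty$ the H\"older estimate above (with exponent $r'<\infty$) shows the series does converge absolutely, and this is precisely the mechanism of the paper's proof. The cancellation \eqref{eq:47} is needed for the uniform $\ell^r$-bounds on $T_N$ and $T^*$, not for the pointwise existence of the limit. This does not affect the validity of your proof, which never uses the claim, but it dismisses the simpler argument that actually works.
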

\begin{proof}
	If $N \in \NN$ we define an operator $T^N$ by setting
	$$
	T^N f(n) =
	\sum_{\atop{p\in\pm\PP}{|p|> N}}
	f(x-p) K(p) \log|p|
	$$
	for any $f \in \ell^r(\ZZ)$. By H\"{o}lder's inequality we see that for every
	$n \in \ZZ$
	$$
	\abs{T^Nf(n)}
	\leq
	2
	\Big( \sum_{\atop{p \in \PP}{p > N}}
	\big( p^{-1} \log p \big)^{r'} \Big)^{1/{r'}}
	\norm{f}_{\ell^r}
	$$
	where $r'$ stands for the conjugate exponent to $r$, i.e. $1/r+1/r'=1$. The last inequality
	shows that, on the one hand, $T$ is well defined for any $f\in\ell^r(\ZZ)$, on the other --
	proves \eqref{eqm:15}. Next, Fatou's lemma with boundedness of $T^*$ yield
	$$
	\norm{Tf}_{\ell^r}
	= \norm{\liminf_{N\to\infty}T_N f}_{\ell^r}
	\leq \liminf_{N\to\infty} \lVert T_Nf \rVert_{\ell^r}
	\leq \lVert T^*f \rVert_{\ell^r}
	\lesssim_r \norm{f}_{\ell^r}
	$$
	which completes the proof.
\end{proof}

\subsection{Oscillatory norm for $H_N$}
Let $\seq{N_j}{j \in \NN}$ be a strictly increasing sequence of $\Lambda$ elements. We set
$N_j = \tau^{k_j}$ and $\Lambda_j = \Lambda \cap (N_j, N_{j+1}]$. In this Section we consider
the kernel $K(x) = x^{-1}$. Since each $K_j$ for $j \in \NN$ has mean zero we have
\begin{equation}
	\label{eq:40}
	\abs{\Phi_j(\xi)}
	\leq
	\int_{\RR} \abs{1 - e^{2\pi i \xi x}} \abs{K_j(x)} dx \lesssim \abs{\xi} \tau^j.
\end{equation}
Let $H_N$ denote the truncated Hilbert transform
$$
H_N f(n) = \sum_{p \in \pm \PP_N} \frac{f(n - p)}{p} \log \abs{p}.
$$
The following argument is based on \cite[Section 7]{Bou1}.
\begin{proposition}
	\label{prop:5}
	There is $C > 0$ such that for every $J \in \NN$ and $s\in\NN$ we have
	$$
	\sum_{j=0}^J
	\big\lVert
	\sup_{\tau^k \in \Lambda_j}
	\big\lvert
	\mathcal{F}^{-1}\big((\Psi_k -\Psi_{k_j}) \eta_s \hat{f} \big)
	\big\rvert
	\big\rVert_{\ell^2}^2
	\leq C \big\|\mathcal{F}^{-1}\big(\eta_s \hat{f}\big)\big\|_{\ell^2}^2.
	$$
\end{proposition}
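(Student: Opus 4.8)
The quantity to bound is a square function over the blocks $\Lambda_j$, so the plan is to reduce it to an estimate for the full (non-truncated) square function and then exploit the size estimate \eqref{eq:40} together with the mean-zero property of the kernels $K_j$. First I would write $\Psi_k - \Psi_{k_j} = \sum_{k_j < i \leq k} \Phi_i$, so that the inner supremum is controlled by the $\ell^2(i)$-square function:
\begin{equation*}
	\sup_{\tau^k \in \Lambda_j}
	\big\lvert \mathcal{F}^{-1}\big((\Psi_k - \Psi_{k_j})\eta_s \hat f\big)(n)\big\rvert^2
	\leq \Big(\sum_{k_j < i \leq k_{j+1}} \big\lvert \mathcal{F}^{-1}\big(\Phi_i \eta_s \hat f\big)(n)\big\rvert\Big)^2.
\end{equation*}
Summing over $j$ and $n$, the mixed terms make this awkward; the cleaner route is instead to use a standard maximal-to-square-function trick: dominate $\sup_{\tau^k \in \Lambda_j}|\mathcal F^{-1}((\Psi_k - \Psi_{k_j})\eta_s\hat f)|$ by $\big(\sum_{k_j < i \leq k_{j+1}} |\mathcal F^{-1}(\Phi_i\eta_s\hat f)|^2\big)^{1/2}$ times a factor coming from telescoping, or — following \cite[Section 7]{Bou1} directly — bound the supremum over $\Lambda_j$ via
$$
\sup_{\tau^k \in \Lambda_j}\big\lvert g_k \big\rvert^2 \leq \big\lvert g_{k_j}\big\rvert^2 + 2\sum_{k_j < i \leq k_{j+1}} \big\lvert g_i - g_{i-1}\big\rvert \cdot \big\lvert g_i \big\rvert \leq \Big(\sum_{k_j < i \leq k_{j+1}} \big\lvert \mathcal F^{-1}(\Phi_i \eta_s \hat f)\big\rvert\Big)^2
$$
where $g_k = \mathcal F^{-1}((\Psi_k - \Psi_{k_j})\eta_s \hat f)$.

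Once the $j$-sum is interchanged with everything and Plancherel is applied, I would reduce matters to a pointwise multiplier bound on the Fourier side. The key observation is that the supports of the pieces overlap in a controlled way: by Cauchy--Schwarz in the inner $i$-sum against a suitable weight, and then Plancherel, the left-hand side is controlled by
$$
\sum_{j=0}^J \sum_{k_j < i \leq k_{j+1}} (\text{length of block}) \int_{\RR} \big\lvert \Phi_i(\xi)\big\rvert^2 \big\lvert \eta_s(\xi) \hat f(\xi)\big\rvert^2 d\xi,
$$
but since each $i$ lies in exactly one block $\Lambda_j$ and the block lengths are what generate the loss, one must be more careful: the right way is to pull out $\sup_i |\Phi_i(\xi)\eta_s(\xi)|$-type quantities and recognize that $\eta_s$ localizes $\xi$ to within $A^{-s-1}$ of a rational $a/q$ with $q \simeq 2^s$, so that on $\operatorname{supp}\eta_s(\cdot - a/q)$ the estimate \eqref{eq:40} gives $|\Phi_i(\xi - a/q)| \lesssim |\xi - a/q|\tau^{k_i} \lesssim A^{-s}\tau^{k_i}$ for small $k_i$ while \eqref{eq:39} gives $|\Phi_i(\xi - a/q)| \lesssim |\xi - a/q|^{-1}\tau^{-k_i}$ for large $k_i$, and crucially $\sum_i \min\{|\xi-a/q|\tau^{k_i}, |\xi-a/q|^{-1}\tau^{-k_i}\}^2$ is summable uniformly in $\xi$ because the $\tau^{k_i}$ are lacunary. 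This is exactly the mechanism that makes Bourgain's square function finite, and it yields a bound independent of $J$ with the $\eta_{s-1}\eta_s = \eta_s$ trick absorbing the $\hat f$ cleanly.

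The main obstacle I anticipate is organizing the interchange of the supremum over $\Lambda_j$ with the $\ell^2$-norm without losing a factor depending on $J$ or on the number of blocks — the telescoping/square-function passage must be done so that the block lengths $k_{j+1} - k_j$ only enter through the already-summable lacunary geometric series and not as a multiplicative loss. Concretely, after the pointwise domination $\sup_{\tau^k \in \Lambda_j}|g_k| \leq \sum_{k_j < i \leq k_{j+1}}|\mathcal F^{-1}(\Phi_i\eta_s\hat f)|$, I would square, expand, and use Cauchy--Schwarz with weights $2^{-\epsilon|i-i'|}$ (exploiting the kernel of $\Phi_i\Phi_{i'}$ decaying off the diagonal due to lacunarity) to reduce to the diagonal $\sum_i \|\mathcal F^{-1}(\Phi_i\eta_s\hat f)\|_{\ell^2}^2$, and then Plancherel together with $\sum_i |\Phi_i(\xi)|^2 \lesssim 1$ — which follows from \eqref{eq:39} and \eqref{eq:40} by the lacunary structure of $\Lambda$ — finishes the estimate with an absolute constant $C$, independent of $s$, $J$, and $f$.
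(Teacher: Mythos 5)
There is a genuine gap at the heart of your plan: the reduction of the block supremum to $\sum_{k_j<i\le k_{j+1}}\abs{\mathcal F^{-1}(\Phi_i\eta_s\hat f)}$ followed by a claimed ``off-diagonal decay'' of the cross terms. Once you take absolute values, the approximate frequency orthogonality of the $\Phi_i$'s is destroyed, and the cross terms $\sum_n\abs{h_i(n)}\,\abs{h_{i'}(n)}$ (with $h_i=\mathcal F^{-1}(\Phi_i\eta_s\hat f)$) admit no bound better than Cauchy--Schwarz $\norm{h_i}_{\ell^2}\norm{h_{i'}}_{\ell^2}$, with no gain in $\abs{i-i'}$. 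Consequently the weights $2^{-\epsilon\abs{i-i'}}$ have nothing to act on, and only the diagonal bound $\sum_i\abs{\Phi_i(\xi)}^2\lesssim1$ survives, which is insufficient: the step loses a factor of the block length $k_{j+1}-k_j$, which is unbounded because $(N_j)$ is an arbitrary increasing sequence in $\Lambda$. A concrete obstruction: take one block of length $M$ and $\hat f$ a sum of $M$ wave packets, $f=\sum_{i=1}^M (ML)^{-1/2}e^{2\pi i\tau^{-i}n}\mathds{1}_{[0,L]}(n)$ with $L\gg\tau^M$, so $\norm{f}_{\ell^2}^2\simeq1$; then $\abs{h_i(n)}\simeq(ML)^{-1/2}$ on most of $[0,L]$ for every $i$, hence $\big\lVert\sum_i\abs{h_i}\big\rVert_{\ell^2}^2\simeq M$, while the quantity actually appearing in the proposition stays $O(1)$ because the phases $e^{2\pi i\tau^{-i}n}$ cancel in the partial sums. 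So the inequality you are trying to prove after the modulus-triangle-inequality step is simply false uniformly in the blocks; the cancellation between different scales inside a block is exactly what must be preserved.

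The paper's proof preserves it by a frequency trichotomy rather than a scale-by-scale majorization: with $B_j=\{\abs{\xi}\le N_j^{-1}\}$ one splits $\eta_s\hat f$ into the pieces supported on $B_{j+1}$, on $B_j^c$, and on the transition band $B_j\setminus B_{j+1}$. On $B_{j+1}$ the mean-zero bound \eqref{eq:40} gives $\abs{\Psi_k-\Psi_{k_j}}\lesssim\abs{\xi}\tau^k$, and on $B_j^c$ the decay \eqref{eq:39} gives $\abs{\Psi_{k_{j+1}}-\Psi_k}\lesssim\abs{\xi}^{-1}\tau^{-k}$, so after crudely replacing the supremum by the $\ell^2$-sum over $k$ in the block the resulting multiplier $\sum_j\ind{B_{j+1}}\sum_k\abs{\Psi_k-\Psi_{k_j}}^2$ (and its analogue on $B_j^c$) is bounded by a lacunary geometric series, uniformly in $\xi$ and in the block lengths; here the smallness of the multiplier on the chosen frequency region, not orthogonality of moduli, absorbs the block length. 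On the transition band --- which for each fixed $\xi$ is met by at most one $j$ --- no smallness is available, and the paper invokes the genuine maximal estimate of Proposition \ref{prop:3} (the input from \cite{riv}) applied to $\ind{B_j}\ind{B_{j+1}^c}\eta_s\hat f$, summing in $j$ by disjointness of these bands and Plancherel. Your sketch contains no substitute for this last ingredient (and the digression about localization near rationals $a/q$ is not needed here, since the estimate is for a single fixed $s$); without the trichotomy and the maximal-function input the argument does not close.
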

\begin{proof}
	Let $B_j = \{x \in (-1/2, 1/2): \abs{x} \leq N_j^{-1}\}$. By Plancherel's theorem we have
	\begin{multline*}
		\sum_{j=0}^J
		\big\lVert
		\sup_{\tau^k \in \Lambda_j}
		\big\lvert
		\mathcal{F}^{-1} \big((\Psi_k - \Psi_{k_j}) \ind{B_{j+1}} \eta_s \hat{f}\big)
		\big\rvert
		\big\rVert_{\ell^2}^2
		\leq
		\sum_{j=0}^J
		\sum_{k = k_j}^{k_{j+1}}
		\big\lVert
		\mathcal{F}^{-1}\big((\Psi_k - \Psi_{k_j}) \ind{B_{j+1}} \eta_s \hat{f}\big)
		\big\rVert_{\ell^2}^2\\
		\leq
		\Big\lVert
		\sum_{j=0}^J \ind{B_{j+1}}
		\sum_{k=k_j}^{k_{j+1}} \abs{\Psi_k - \Psi_{k_j}}^2
		\Big\rVert_{L^\infty}
		\big\|\mathcal{F}^{-1}\big(\eta_s \hat{f}\big)\big\|_{\ell^2}^2.
	\end{multline*}
	By \eqref{eq:40} we have
	$$
	\abs{\Psi_k(\xi) - \Psi_{k_j}(\xi)}
	= \Big\lvert \sum_{l = k_j+1}^k \Phi_l(\xi) \Big\rvert
	\lesssim \abs{\xi} \tau^k.
	$$
	Hence,
	$$
	\sum_{j=0}^J \ind{B_{j+1}}(\xi)
	\sum_{k = k_j}^{k_{j+1}}
	\abs{\Psi_k(\xi) - \Psi_{k_j}(\xi)}^2
	\lesssim
	\abs{\xi}^2
	\sum_{j=0}^J
	\ind{B_{j+1}}(\xi)
	\sum_{k = k_j}^{k_{j+1}} \tau^{2k}
	\lesssim
	\abs{\xi}^2
	\sum_{j: N_{j+1} \leq \abs{\xi}^{-1}} N_{j+1}^2
	\lesssim 1.
	$$
	Therefore, we obtain
	$$
	\sum_{j=0}^J
	\big\lVert
	\sup_{\tau^k \in \Lambda_j}
	\big\lvert
	\mathcal{F}^{-1} \big((\Psi_k - \Psi_{k_j}) \ind{B_{j+1}} \eta_s \hat{f}\big)
	\big\rvert
	\big\rVert_{\ell^2}^2
	\lesssim
	\big\|\mathcal{F}^{-1}\big(\eta_s \hat{f}\big)\big\|_{\ell^2}^2.
	$$
	Similar for $B_j^c$, replacing $\Psi_{k_j}$ by $\Psi_{k_{j+1}}$ under the supremum, we can estimate
	\begin{multline*}
		\sum_{j=0}^J
		\big\lVert
		\sup_{\tau^k \in \Lambda_j}
		\big\lvert
		\mathcal{F}^{-1} \big((\Psi_k - \Psi_{k_j}) \ind{B_j^c} \eta_s \hat{f}\big)
		\big\rvert
		\big\rVert_{\ell^2}^2
		\leq
		\sum_{j=0}^J
		\sum_{k = k_j}^{k_{j+1}}
		\big\lVert
		\mathcal{F}^{-1}\big((\Psi_{k_{j+1}} - \Psi_k) \ind{B_j^c} \eta_s \hat{f}\big)
		\big\rVert_{\ell^2}^2\\
		\leq
		\Big\lVert
		\sum_{j=0}^J \ind{B_j^c}
		\sum_{k=k_j}^{k_{j+1}} \abs{\Psi_{k_{j+1}} - \Psi_k}^2
		\Big\rVert_{L^\infty}
		\big\|\mathcal{F}^{-1}\big(\eta_s \hat{f}\big)\big\|_{\ell^2}^2.
	\end{multline*}
	Now, using \eqref{eq:39} we get
	$$
	\abs{\Psi_{k_{j+1}}(\xi) - \Psi_k(\xi)}
	\lesssim \abs{\xi}^{-1} \tau^{-k}
	$$
	thus
	$$
	\sum_{j=0}^J \ind{B_j^c}(\xi)
	\sum_{k = k_j}^{k_{j+1}}
	\abs{\Psi_{k_{j+1}}(\xi) - \Psi_k(\xi)}^2
	\lesssim
	\abs{\xi}^{-2}
	\sum_{j=0}^J
	\ind{B_j^c}(\xi)
	\sum_{k = k_j}^{k_{j+1}} \tau^{-2k}\\
	\lesssim
	\abs{\xi}^{-2}
	\sum_{j: N_j \geq \abs{\xi}^{-1}} N_j^{-2}
	\lesssim 1.
	$$
	Therefore, we conclude
	$$
	\sum_{j=0}^J
	\big\lVert
	\sup_{\tau^k \in \Lambda_j}
	\big\lvert
	\mathcal{F}^{-1} \big((\Psi_k - \Psi_{k_j}) \ind{B_j^c} \eta_s \hat{f}\big)
	\big\rvert
	\big\rVert_{\ell^2}^2
	\lesssim
	\big\|\mathcal{F}^{-1}\big(\eta_s \hat{f}\big)\big\|_{\ell^2}^2.
	$$
	Eventually, by Proposition \ref{prop:3}
	$$
	\sum_{j=0}^J
	\Big\lVert
	\sup_{\tau^k \in \Lambda_j}
	\big\lvert
	\mathcal{F}^{-1}\big((\Psi_k - \Psi_{k_j}) \ind{B_j} \ind{B_{j+1}^c} \eta_s \hat{f} \big)
	\big\rvert
	\Big\rVert_{\ell^2}^2
	\lesssim
	\sum_{j=0}^J
	\norm{\mathcal{F}^{-1} \big(\ind{B_j} \ind{B_{j+1}^c} \eta_s\hat{f}\big)}_{\ell^2}^2
	$$
	which is bounded by $\big\|\mathcal{F}^{-1}\big(\eta_s \hat{f}\big)\big\|_{\ell^2}^2$.
\end{proof}

\begin{theorem}
	\label{th:5}
	For every $J \in \NN$ there is $C_J$ such that
	$$
	\sum_{j=0}^J
	\Big\lVert
	\sup_{\tau^k \in \Lambda_j}
	\big\lvert
	H_{\tau^k } f - H_{N_j} f
	\big\rvert
	\Big\rVert_{\ell^2}^2
	\leq
	C_J \norm{f}_{\ell^2}^2
	$$
	and $\lim_{J \to \infty} C_J/J = 0$.
\end{theorem}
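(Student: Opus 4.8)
The plan is to run the $r=2$ part of the proof of Theorem~\ref{th:3}, with the maximal function there replaced throughout by the present oscillation quantity and with Proposition~\ref{prop:5} inserted at the one place where that argument used only a trivial $\ell^2$ estimate. Since $\tau^0=1$ and $K_l$ is supported on $\abs{x}\in(\tau^l,\tau^{l+1}]$, the kernels telescope, $\sum_{l=0}^{k-1}K_l=K\cdot\ind{\{1<\abs{x}\le\tau^k\}}$, so $H_{\tau^k}f-H_{N_j}f=\mathcal{F}^{-1}\big(\big(\sum_{l=k_j}^{k-1}m_l\big)\hat f\big)$. Fix $\alpha>16$ large and split $m_l=\nu_l+(m_l-\nu_l)$.

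For the error term, Proposition~\ref{prop:2} gives $\norm{m_l-\nu_l}_{L^\infty}\lesssim_\alpha l^{-\alpha/4}$ for $l\ge1$ and $\norm{m_0-\nu_0}_{L^\infty}\lesssim1$, so, pulling the supremum out by the triangle inequality and using Plancherel's theorem,
\[
\sum_{j=0}^J\Big\lVert\sup_{\tau^k\in\Lambda_j}\Big\lvert\sum_{l=k_j}^{k-1}\mathcal{F}^{-1}\big((m_l-\nu_l)\hat f\big)\Big\rvert\Big\rVert_{\ell^2}^2
\le\sum_{j=0}^J\Big(\sum_{l\ge k_j}\norm{m_l-\nu_l}_{L^\infty}\Big)^2\norm{f}_{\ell^2}^2
\lesssim_\alpha\sum_{m\ge0}(1+m)^{2-\alpha/2}\,\norm{f}_{\ell^2}^2,
\]
which is $O(\norm{f}_{\ell^2}^2)$ because $(k_j)_{j\in\NN}$ is strictly increasing and $\alpha$ is large; thus the error contributes a bounded amount to $C_J$.

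For the main term, $\sum_{l=k_j}^{k-1}\nu_l^s(\xi)=\sum_{a/q\in\mathscr{R}_s}\frac{\mu(q)}{\varphi(q)}\big(\Psi_{k-1}-\Psi_{k_j-1}\big)(\xi-a/q)\,\eta_s(\xi-a/q)$, where $\Psi_k=\sum_{l=0}^k\Phi_l$. Pulling the sum over $s$ out of the supremum and using Cauchy--Schwarz with geometric weights, it is enough to find $\delta>0$ such that, uniformly in $s\in\NN$ and $J$,
\[
\sum_{j=0}^J\Big\lVert\sup_{\tau^k\in\Lambda_j}\Big\lvert\sum_{l=k_j}^{k-1}\mathcal{F}^{-1}\big(\nu_l^s\hat f\big)\Big\rvert\Big\rVert_{\ell^2}^2\le C\,2^{-\delta s}\norm{f}_{\ell^2}^2.
\]
This is obtained by repeating the $r=2$ argument of Theorem~\ref{th:3}: by Proposition~\ref{prop:3} one assumes $s\ge2$; one pulls out the weight $\tfrac1{\varphi(q)}$ per denominator $q\in[2^s,2^{s+1})$ (keeping $\sum_{a\in A_q}$ inside the norm) and, using the disjointness of the arcs $\eta_s(\cdot-a/q)$, $a/q\in\mathscr{R}_s$ (valid once $A>32$), and $\eta_s=\eta_s\eta_{s-1}$, replaces $\hat f$ by $G_q=\sum_{a\in A_q}\eta_{s-1}(\cdot-a/q)\hat f$; then Cauchy--Schwarz in $q$, the bound \eqref{eq:5} for $\varphi(q)$, and the Plancherel identity $\sum_{a/q\in\mathscr{R}_s}\norm{\mathcal{F}^{-1}(\eta_{s-1}(\cdot-a/q)\hat f)}_{\ell^2}^2\le\norm{f}_{\ell^2}^2$ reduce everything to the single-$q$ oscillation inequality
\[
\sum_{j=0}^J\Big\lVert\sup_{\tau^k\in\Lambda_j}\Big\lvert\sum_{a\in A_q}\mathcal{F}^{-1}\big((\Psi_{k-1}-\Psi_{k_j-1})(\cdot-a/q)\,\eta_s(\cdot-a/q)\,\hat g\big)\Big\rvert\Big\rVert_{\ell^2}^2\lesssim_\epsilon q^{2\epsilon}\norm{g}_{\ell^2}^2
\]
and supply the gain $2^{-\delta s}$. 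The displayed inequality is proved exactly as \eqref{eqm:12} was: M\"obius inversion \eqref{eqm:4} turns $\sum_{a\in A_q}$ into a sum over $b\mid q$ of terms which, on each residue class modulo $q$, collapse (since $b\mid q$) into a single convolution on the sublattice $b\ZZ$; restricted to a residue class modulo $b$ and rescaled by $\xi\mapsto b\xi$, each such term is one of the operators controlled by Proposition~\ref{prop:5} (whose proof is unaffected by rescaling the lacunary sequence by the integer $b$), and summing over the $b$ residue classes — which have disjoint Fourier supports — together with $d(q)\lesssim_\epsilon q^\epsilon$ from \eqref{eq:6} absorbs the resulting $b$-factors, by the same mechanism that makes $b\norm{\mathcal{F}^{-1}\eta_s(bj)}_{\ell^1(j)}\lesssim1$ in Theorem~\ref{th:3}. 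Here Proposition~\ref{prop:3} strips $\Psi_{k-1}-\Psi_{k_j-1}$ off the supremum before the sublattice step and Lemmas~\ref{lem:1}--\ref{lem:2} supply the sampling bounds, exactly as in the proof of Theorem~\ref{th:3}.

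Summing the $2^{-\delta s}$ bounds over $s$ and adding the error term gives the theorem with $C_J\le C$, so $\lim_{J\to\infty}C_J/J=0$ with much room to spare. The step I expect to be the main obstacle is the removal of $\Psi_{k-1}-\Psi_{k_j-1}$ from under the restricted supremum, which must go through the $\ell^2$-maximal bound of Proposition~\ref{prop:3} and never scale by scale, since a scale-by-scale triangle inequality would introduce a constant growing with $\#\Lambda_j$ and leave no $o(J)$ margin. The remainder is bookkeeping: one must check that the constants in the M\"obius/sublattice reduction and in Proposition~\ref{prop:5} itself are independent of $J$ — which they are, and which is precisely why the argument is phrased in terms of the oscillation quantity rather than a maximal function, as this lets Proposition~\ref{prop:5} enter with a single $J$-free constant.
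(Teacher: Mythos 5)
Your plan diverges from the paper's proof and aims at something strictly stronger: a per-$s$ oscillation inequality with constant $C2^{-\delta s}$ \emph{uniform in $J$}, hence $C_J=O(1)$. The paper never attempts this. It only needs $C_J=o(J)$ and gets it cheaply: after disposing of $m_l-\nu_l$ exactly as you do, it splits at $2^{s_0}\simeq J^{1/3}$; for $s\ge s_0$ it simply dominates the restricted supremum by the full maximal function of Theorem \ref{th:3}, accepting the factor $J$ from the sum over $j$ and beating it with $\big(\sum_{s\ge s_0}2^{-\delta s}\big)^2\lesssim J^{-2\delta/3}$; for $s<s_0$ it uses a crude triangle inequality over all fractions in $\bigcup_{s<s_0}\mathscr{R}_s$ (a factor $D_J^2\lesssim J^{2/3}$), then Proposition \ref{prop:5} and Plancherel almost-orthogonality, arriving at $C_J\lesssim J^{1-2\delta/3}+J^{2/3}\log J$. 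So the paper never has to transfer the arithmetic (M\"obius/sampling) machinery of Theorem \ref{th:3} to the oscillation setting at all.

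The gap in your proposal is precisely at the point where that transfer is required: the single-$q$ oscillation inequality with constant $q^{2\epsilon}$, uniform in $J$, is asserted rather than proved, and the mechanism you offer for it is partly wrong. Proposition \ref{prop:3} cannot ``strip $\Psi_{k-1}-\Psi_{k_j-1}$ off the restricted supremum'': it yields an $O(1)$ bound for each block $j$ separately, so after summing squares over $j\le J$ you recover exactly the factor $J$ you yourself say must be avoided. The $J$-uniform stripping has to come from Proposition \ref{prop:5}, through an oscillation analogue of \eqref{eq:16}: one must run the comparison trick of Theorem \ref{th:3} on the quantities $\mathcal{J}_l=\big(\sum_{j=0}^J\lVert\sup_{\tau^k\in\Lambda_j}\lvert\mathcal{F}^{-1}((\Psi_k-\Psi_{k_j})\eta_s\hat h)\rvert(qm+l)\rVert_{\ell^2(m)}^2\big)^{1/2}$, using Proposition \ref{prop:5} both to bound $\sum_{l}\mathcal{J}_l^2$ and to control the error terms produced via Lemma \ref{lem:1} (applied to the modulated function $(1-e^{2\pi i\xi(l-l')})\hat h$), and then Lemma \ref{lem:2} to return to the sampled norm; none of this is carried out in your sketch. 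Likewise, the ``restrict to residue classes modulo $b$, rescale $\xi\mapsto b\xi$, and apply Proposition \ref{prop:5} to the rescaled lacunary sequence'' shortcut conflates the moduli $b$ and $q$ and is not how the paper's scheme works: in Theorem \ref{th:3} the change of variables, Minkowski, and the Dirichlet-kernel computation $b\lVert\mathcal{F}^{-1}\eta_s(bj)\rVert_{\ell^1(j)}\lesssim1$ occur only \emph{after} the operator has been removed by the sampled estimate, so no rescaled version of Proposition \ref{prop:5} ever arises, and its validity for the rescaled kernels would itself need an argument. Until the sampled oscillation estimate is actually established, neither your claimed $C_J\le C$ nor even the required $C_J=o(J)$ follows from your argument; if that estimate resists, the fallback is the paper's $s_0$-splitting, which proves the theorem as stated with far less machinery.
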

\begin{proof}
	By Proposition \ref{prop:2}, we have
	$$
	\sum_{j=0}^J
	\Big\rVert
	\sup_{\tau^k \in \Lambda_j}
	\Big\lvert
	\sum_{l = k_j+1}^k
	\mathcal{F}^{-1} \big((m_l - \nu_l)\hat{f}\big)
	\Big\rvert
	\Big\lVert_{\ell^2}^2
	\lesssim
	\Big(
	\sum_{j=0}^J
	\sum_{l = k_j+1}^{k_{j+1}} l^{-2}
	\Big)^{1/2}
	\norm{f}_{\ell^2}^2\lesssim \norm{f}_{\ell^2}^2.
	$$
	Consequently, it is enough to demonstrate
	$$
	\sum_{j = 0}^J
	\Big\lVert
	\sup_{\tau^k \in \Lambda_j}
	\Big\lvert
	\sum_{l = k_j+1}^k \mathcal{F}^{-1}\big(\nu_l \hat{f}\big)
	\Big\rvert
	\Big\rVert_{\ell^2}^2
	\leq C_J \norm{f}_{\ell^2}^2
	$$
	where $\lim_{J \to \infty} C_J/J = 0$.
	
	Let $s_0 \in \NN$ be defined as $2^{s_0} \leq J^{1/3} < 2^{s_0+1}$. By Theorem \ref{th:3}
	we have
	$$
	\Big\lVert
	\sup_{\tau^k \in \Lambda_j}
	\Big\lvert
	\sum_{s = s_0}^{\infty} \sum_{l=k_j+1}^k \mathcal{F}^{-1}\big(\nu_l^s \hat{f} \big)
	\Big\rvert
	\Big\rVert_{\ell^2}
	\lesssim
	\sum_{s = s_0}^\infty
	\Big\lVert
	\sup_{k \in \NN}
	\Big\lvert
	\sum_{l=0}^k \mathcal{F}^{-1}\big(\nu_l^s \hat{f} \big)
	\Big\rvert
	\Big\rVert_{\ell^2}
	\lesssim J^{-\delta/3} \norm{f}_{\ell^2}.
	$$
	We set
	$$
	D_J = \sum_{s=0}^{s_0-1} \sum_{a/q \in \mathscr{R}_s} \frac{1}{\varphi(q)}.
	$$
	By the change of variables, Cauchy--Schwartz inequality and  by Proposition \ref{prop:5} we get
    \begin{multline*}
    	\sum_{j=0}^J
		\Big\lVert
		\sup_{\tau^k \in \Lambda_j}
		\Big\lvert
		\sum_{s = 0}^{s_0-1} \sum_{l=k_j+1}^k \mathcal{F}^{-1}\big(\nu_l^s \hat{f} \big)
		\Big\rvert
		\Big\rVert_{\ell^2}^2\\
	    \le\sum_{j = 0}^J\bigg(\sum_{s = 0}^{s_0-1}\sum_{a/q \in \mathscr{R}_s}\frac{1}{\varphi(q)}
	    \Big\lVert
		\sup_{\tau^k \in \Lambda_j}
		\Big\lvert
		\sum_{l=k_j+1}^k \mathcal{F}^{-1}\big(\Phi_l \eta_s \hat{f}(\cdot + a/q)\big)
		\Big\rvert\Big\rVert_{\ell^2}\bigg)^2\\
		\leq
		D_J^2
		\sum_{s = 0}^{s_0-1} \sum_{a/q \in \mathscr{R}_s}
		\sum_{j = 0}^J
		\Big\lVert
		\sup_{\tau^k \in \Lambda_j}
		\Big\lvert
		 \mathcal{F}^{-1}\big((\Psi_k-\Psi_{k_j}) \eta_s \hat{f}(\cdot + a/q)\big)
		\Big\rvert
		\Big\rVert_{\ell^2}^2\\
		\lesssim D_J^2\sum_{s = 0}^{s_0-1} \sum_{a/q \in \mathscr{R}_s}
		\big\|
		\mathcal{F}^{-1}\big(\eta_s(\cdot - a/q) \hat{f}\big)
		\big\|_{\ell^2}^2\lesssim D_J^2 s_0 \|f\|_{\ell^2}^2.
    \end{multline*}
	By the definition of $\mathscr{R}_s$ we see that $D_J\lesssim2^{s_0}\le J^{1/3}$
	thus we achieve
	$$
	\sum_{j = 0}^J
	\Big\lVert
	\sup_{\tau^k \in \Lambda_j}
	\Big\lvert
	\sum_{l = k_j+1}^k \mathcal{F}^{-1}\big(\nu_l \hat{f}\big)
	\Big\rvert
	\Big\rVert_{\ell^2}^2
	\lesssim
	J \big(J^{-\delta/3} + J^{-1/3} \log J \big) \norm{f}_{\ell^2}^2
	$$
	which finishes the proof.
\end{proof}

\section{Dynamical systems}
Let $(X, \mathcal{B}, \mu, S)$ be a dynamical system on a measure space $X$. Let
$S: X \rightarrow X$ be an invertible measure preserving transformation. For $N > 0$ we
set
$$
\mathcal{H}_N f (x) = \sum_{p \in \pm \PP_N} \frac{f(S^{-p} x)}{p}\log \abs{p}.
$$
We are going to show Theorem \ref{thm:1}. We start from oscillatory norm.
\begin{proposition}
	\label{prop:4}
	For each $J \in \NN$ there is $C_J$ such that
	$$
	\sum_{j = 0}^J
	\big\lVert
	\sup_{N \in \Lambda_j}
	\big\lvert
	\mathcal{H}_N f - \mathcal{H}_{N_j} f
	\big\rvert
	\big\rVert_{L^2(\mu)}^2 \leq C_J \norm{f}_{L^2(\mu)}^2
	$$
	and $\lim_{J \to \infty} C_J/J = 0$.
\end{proposition}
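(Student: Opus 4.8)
The plan is to deduce Proposition~\ref{prop:4} from Theorem~\ref{th:5} by Calder\'on's transference principle. Since the oscillatory quantity on the left is assembled from finitely many $L^2(\mu)$-norms of the truncations $\mathcal{H}_N f$, the transference produces no loss and, in particular, returns exactly the constant $C_J$ of Theorem~\ref{th:5}, so the required decay $\lim_{J\to\infty}C_J/J=0$ comes for free. Fix $J\in\NN$ and set $L=N_{J+1}=\tau^{k_{J+1}}$; every truncation appearing in the statement (over scales $N\in\Lambda_j$ with $j\le J$, where $\Lambda_j=\Lambda\cap(N_j,N_{j+1}]$) involves only primes $p$ with $\abs{p}\le L$. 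For $F\in L^2(\mu)$ and $y\in X$ put
$$
\Phi(F,y)=\sum_{j=0}^J\ \sup_{\tau^k\in\Lambda_j}
\big\lvert\mathcal{H}_{\tau^k}F(y)-\mathcal{H}_{N_j}F(y)\big\rvert^2,
$$
which is nonnegative and measurable (the supremum runs over the finite set $\Lambda_j$), and note that the left-hand side of the proposition equals $\int_X\Phi(f,y)\,d\mu(y)$. Analogously, for $g\in\ell^2(\ZZ)$ and $n\in\ZZ$ set $\Phi_\ZZ(g,n)=\sum_{j=0}^J\sup_{\tau^k\in\Lambda_j}\abs{H_{\tau^k}g(n)-H_{N_j}g(n)}^2$; Theorem~\ref{th:5} is precisely the statement $\sum_{n\in\ZZ}\Phi_\ZZ(g,n)\le C_J\norm{g}_{\ell^2}^2$.

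Fix a large $M\in\NN$. For $x\in X$ define the finitely supported function $g_x\colon\ZZ\to\CC$ by $g_x(m)=f(S^mx)\,\ind{\{\abs{m}\le M+L\}}(m)$ (well defined for a.e. $x$, since $\{f=\pm\infty\}$ is $\mu$-null and $S$ preserves $\mu$). If $\abs{n}\le M$, then for every $p\in\pm\PP$ with $\abs{p}\le L$ we have $\abs{n-p}\le M+L$, hence $g_x(n-p)=f(S^{n-p}x)$ and therefore, since $g_x(m)=f(S^mx)$ on the relevant range, $H_{\tau^k}g_x(n)=\mathcal{H}_{\tau^k}f(S^nx)$ and $H_{N_j}g_x(n)=\mathcal{H}_{N_j}f(S^nx)$ for all $\tau^k\in\Lambda_j$, $j\le J$; consequently $\Phi_\ZZ(g_x,n)=\Phi(f,S^nx)$ for $\abs{n}\le M$. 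Applying Theorem~\ref{th:5} to $g_x$ and discarding the terms with $\abs{n}>M$ we obtain
$$
\sum_{\abs{n}\le M}\Phi(f,S^nx)\le\sum_{n\in\ZZ}\Phi_\ZZ(g_x,n)
\le C_J\norm{g_x}_{\ell^2}^2=C_J\sum_{\abs{m}\le M+L}\abs{f(S^mx)}^2 .
$$
All integrands being nonnegative, we integrate in $x$ and invoke Tonelli's theorem together with the measure preservation of $S$ (so that $\int_X\abs{F(S^mx)}^2\,d\mu(x)=\norm{F}_{L^2(\mu)}^2$ for each $m$), which yields
$$
(2M+1)\int_X\Phi(f,y)\,d\mu(y)\le C_J\,(2M+2L+1)\,\norm{f}_{L^2(\mu)}^2 .
$$
Dividing by $2M+1$ and letting $M\to\infty$, the ratio $(2M+2L+1)/(2M+1)$ tends to $1$, and we conclude
$$
\sum_{j=0}^J\big\lVert\sup_{N\in\Lambda_j}\abs{\mathcal{H}_Nf-\mathcal{H}_{N_j}f}\big\rVert_{L^2(\mu)}^2
=\int_X\Phi(f,y)\,d\mu(y)\le C_J\norm{f}_{L^2(\mu)}^2 ,
$$
with the very same $C_J$ as in Theorem~\ref{th:5}, so $\lim_{J\to\infty}C_J/J=0$.

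I expect no serious obstacle here: the only point demanding attention is the truncation length, namely keeping the support of $g_x$ wide enough — width $M+L=M+N_{J+1}$ — that the identity $\mathcal{H}_{\tau^k}f(S^nx)=H_{\tau^k}g_x(n)$ holds for \emph{every} $\abs{n}\le M$ and \emph{every} scale $\tau^k$ occurring in the sum over $j\le J$, and likewise for $\mathcal{H}_{N_j}$. Everything else is the routine mechanics of transference, and no analytic input beyond Theorem~\ref{th:5} is used. (This proposition, combined with the standard oscillation-implies-a.e.-convergence argument, is the last ingredient needed for Theorem~\ref{thm:1}.)
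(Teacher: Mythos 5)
Your proposal is correct and is essentially the paper's own argument: both transfer Theorem \ref{th:5} to the dynamical system via Calder\'on's principle, evaluating $f$ along an orbit to build a finitely supported function on $\ZZ$, applying the $\ell^2$ oscillation bound, integrating over $X$ using that $S$ is measure preserving, and letting the truncation length tend to infinity so the boundary loss disappears and the same constant $C_J$ (hence $C_J/J\to 0$) is retained. The only cosmetic difference is bookkeeping of the cutoff (your $M$, $M+N_{J+1}$ versus the paper's $R-N_J$, $R$), which does not affect the limit.
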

\begin{proof}
	Let $R \geq N_{J}$. For a fixed $x \in X$ we define a function on $\ZZ$ by
	$$
	\phi(n) =
	\begin{cases}
		f(S^n x) & \abs{n} \leq R, \\
		0 & \text{otherwise.}
	\end{cases}
	$$
	Then for $\abs{n} \leq  R - N$
	$$
	\mathcal{H}_N f(S^n x) = \sum_{p \in \pm \PP_N} \frac{f(S^{n-p}x)}{p} \log \abs{p}
	= \sum_{p \in \pm \PP_N} \frac{\phi(n-p)}{p} \log \abs{p} = H_N \phi(n).
	$$
	Hence,
	$$
	\sum_{\abs{n} = 0}^{R - N_J}
	\sup_{N \in \Lambda_j}
	\big\lvert
	\mathcal{H}_N f(S^n x) - \mathcal{H}_{N_j} f(S^n x)
	\big\rvert^2
	\leq
	\big\lVert
	\sup_{N \in \Lambda_j}
	\big\lvert
	H_N \phi - H_{N_j} \phi
	\big\rvert
	\big\rVert_{\ell^2}^2.
	$$
	Therefore, by Theorem \ref{th:5} we can estimate
	$$
	\sum_{\abs{n} = 0}^{R - N_J} \sum_{j = 0}^J
	\sup_{N \in \Lambda_j}
	\big\lvert
	\mathcal{H}_N f(S^n x) - \mathcal{H}_{N_j} f(S^n x)
	\big\rvert^2
	\leq
	C_J \norm{\phi}_{\ell^2}^2
	=
	C_J \sum_{\abs{n}=0}^R \abs{f(S^n x)}^2.
	$$
	Since $S$ is a measure preserving transformation integration with respect to $x \in X$
	implies
	$$
	(R - N_J) \sum_{j = 0}^J
	\big\lVert
	\sup_{N \in \Lambda_j}
	\big\lvert
	\mathcal{H}_N f - \mathcal{H}_{N_j} f
	\big\rvert
	\big\rVert_{L^2(\mu)}^2
	\leq
	C_J R \norm{f}_{L^2(\mu)}^2.
	$$
	Eventually, if we divide both sides by $R$ and take $R \rightarrow \infty$ we conclude
	the proof.
\end{proof}

\begin{corollary}
	\label{cor:1}
	The maximal function
	$$
	\mathcal{H}^* f(x) = \sup_{N \in \NN} \big\lvert \mathcal{H}_N f (x)\big\rvert
	$$
	is bounded on $L^r(\mu)$ for each $1 < r < \infty$.
\end{corollary}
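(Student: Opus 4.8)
The plan is to deduce the corollary from the $\ell^r(\ZZ)$ boundedness of $T^*$ proved in Theorem \ref{th:4}, via the classical Calderón transference principle, applied with the Calderón--Zygmund kernel $K(x) = x^{-1}$ (which, after a trivial rescaling absorbing the constant, satisfies \eqref{eq:46}, and satisfies \eqref{eq:47} since $\int_{1 \le |x| \le \lambda} x^{-1}\, dx = 0$). With this choice of $K$, and the weight $\log|p|$, the truncated singular transform $T_N$ is precisely the discrete truncated Hilbert transform $H_N$ along the primes, so Theorem \ref{th:4} says $\big\lVert \sup_{N \in \NN} |H_N \phi|\big\rVert_{\ell^r} \le C_r \lVert \phi \rVert_{\ell^r}$ for every finitely supported $\phi$.

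First I would fix $f \in L^r(\mu)$, a large integer $M$, and a much larger integer $R \ge M$, and for a.e.\ $x \in X$ introduce the finitely supported function $\phi_x(n) = f(S^n x)\ind{|n| \le R}$ on $\ZZ$. The key observation is that if $|n| \le R - M$ and $N \le M$, then $|n - p| \le R$ for every $p \in \pm\PP_N$, hence
$$
\mathcal{H}_N f(S^n x) = \sum_{p \in \pm\PP_N} \frac{f(S^{n-p}x)}{p}\log|p| = \sum_{p \in \pm\PP_N} \frac{\phi_x(n-p)}{p}\log|p| = H_N \phi_x(n),
$$
and therefore $\sup_{N \le M} \big|\mathcal{H}_N f(S^n x)\big| \le \big(\sup_{N\in\NN}|H_N\phi_x|\big)(n)$ for all such $n$.

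Next I would raise to the $r$-th power, sum over $|n| \le R - M$, apply Theorem \ref{th:4} to $\phi_x$, and then integrate in $x$ over $X$. Since $S$ is measure preserving we have $\int_X |f(S^n x)|^r\, d\mu(x) = \lVert f \rVert_{L^r(\mu)}^r$ and $\int_X \sup_{N \le M}|\mathcal{H}_N f(S^n x)|^r\, d\mu(x) = \big\lVert \sup_{N \le M}|\mathcal{H}_N f|\big\rVert_{L^r(\mu)}^r$ for every $n \in \ZZ$, which gives
$$
\big(2(R-M)+1\big)\, \big\lVert \sup_{N \le M} |\mathcal{H}_N f|\big\rVert_{L^r(\mu)}^r \le C_r^r\,(2R+1)\, \lVert f \rVert_{L^r(\mu)}^r.
$$
Dividing by $R$ and letting $R \to \infty$ yields $\big\lVert \sup_{N \le M}|\mathcal{H}_N f|\big\rVert_{L^r(\mu)} \le C_r \lVert f \rVert_{L^r(\mu)}$ with $C_r$ independent of $M$; finally letting $M \to \infty$ and invoking the monotone convergence theorem produces $\lVert \mathcal{H}^* f \rVert_{L^r(\mu)} \le C_r \lVert f \rVert_{L^r(\mu)}$.

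There is no genuine obstacle here: the whole argument is the standard transference of a maximal inequality from $\ZZ$ to an arbitrary dynamical system. The only points needing a little care are the bookkeeping of the restriction $|n| \le R - M$ that makes $\mathcal{H}_N f \circ S^n$ coincide with $H_N \phi_x$ on the relevant range, and the measure-theoretic justification of the substitution $y = S^n x$ for a $\sigma$-finite $\mu$, which is routine because $S$ is invertible and measure preserving.
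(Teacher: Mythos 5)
Your proposal is correct and is essentially the paper's intended argument: Corollary \ref{cor:1} is obtained by transferring the $\ell^r(\ZZ)$ bound for $T^*$ from Theorem \ref{th:4} (applied with the kernel $K(x)=x^{-1}$) via the Calder\'on transference principle, which is exactly the scheme the paper carries out for the oscillation norm in the proof of Proposition \ref{prop:4} (fix $x$, truncate $f(S^n x)$ to $\abs{n}\le R$, compare $\mathcal{H}_N$ with $H_N$ on the smaller range, integrate in $x$ using that $S$ is measure preserving, and let $R\to\infty$). Your handling of the rescaling of $K$, the restriction $N\le M$ with the final monotone convergence step, is a fine and complete way to write this out.
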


Next, we show the pointwise convergence of $\seq{\mathcal{H}_N}{N \in \NN}$.
\begin{theorem}
	Let $f \in L^r(\mu)$, $1 < r < \infty$. For $\mu$-almost every $x \in X$
	$$
	\lim_{N \to \infty} \mathcal{H}_N f(x) = \mathcal{H} f(x)
	$$
	and $\mathcal{H}$ is bounded on $L^r(\mu)$.
\end{theorem}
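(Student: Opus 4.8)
The plan is to obtain the $\mu$-almost everywhere convergence from the maximal bound of Corollary \ref{cor:1}, the oscillation estimate of Proposition \ref{prop:4}, and a density argument, and then to read off the boundedness of $\mathcal{H}$. First, since $\mathcal{H}^*$ is bounded on $L^r(\mu)$ by Corollary \ref{cor:1}, the set
$$
\mathcal{C}_r = \big\{ f \in L^r(\mu) : \big(\mathcal{H}_N f(x)\big)_{N \in \NN} \text{ converges for $\mu$-almost every } x \big\}
$$
is a closed subspace of $L^r(\mu)$: writing $\Omega f(x) = \limsup_{M, N \to \infty} \lvert \mathcal{H}_M f(x) - \mathcal{H}_N f(x)\rvert$, linearity of each $\mathcal{H}_N$ gives $\Omega f \le \Omega g + 2\mathcal{H}^*(f-g)$, so if $f_k \to f$ in $L^r(\mu)$ with $f_k \in \mathcal{C}_r$ then $\mu\{\Omega f > \lambda\} \le \mu\{\mathcal{H}^*(f - f_k) > \lambda/2\} \lesssim_r \lambda^{-r}\norm{f - f_k}_{L^r(\mu)}^r \to 0$ for every $\lambda > 0$, whence $\Omega f = 0$ $\mu$-almost everywhere and $f \in \mathcal{C}_r$. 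Since simple functions belonging to $L^r(\mu)$ are dense in $L^r(\mu)$, and each of them is bounded and supported on a set of finite measure, hence belongs to $L^2(\mu)$, it is enough to show that $\mathcal{H}_N f$ converges $\mu$-almost everywhere for every such $f$.

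Fix such an $f$, say $\norm{f}_\infty \le B$, and fix $\vartheta \in (1, 2]$. All the constructions of Section \ref{sec:3}, and Proposition \ref{prop:4}, remain valid with $\vartheta$ in place of $\tau$; in particular Proposition \ref{prop:4} provides the oscillation inequality along $\{\vartheta^k : k \in \NN\}$ with constants $C_J$ obeying $C_J/J \to 0$ as $J \to \infty$, and by the standard deduction of pointwise convergence from an oscillation estimate of this type (see \cite{Bou1}) the sequence $\big(\mathcal{H}_{\vartheta^k} f(x)\big)_{k \in \NN}$ converges for $\mu$-almost every $x$. To pass to the remaining truncations, note that for $\vartheta^k \le N < \vartheta^{k+1}$, after cancelling the terms at $p$ and $-p$,
$$
\big\lvert \mathcal{H}_N f(x) - \mathcal{H}_{\vartheta^k} f(x) \big\rvert
= \Big\lvert \sum_{p \in \PP \cap (\vartheta^k, N]} \frac{f(S^{-p}x) - f(S^{p}x)}{p}\log p \Big\rvert
\le 2B \sum_{p \in \PP \cap (\vartheta^k, \vartheta^{k+1}]} \frac{\log p}{p},
$$
and by the prime number theorem the right-hand side converges to $2B\log\vartheta$ as $k \to \infty$. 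Hence $\Omega f \le 4B\log\vartheta$ $\mu$-almost everywhere. Since this holds for every $\vartheta \in (1,2]$, with an exceptional null set depending on $\vartheta$, taking a sequence $\vartheta_m \downarrow 1$ yields $\Omega f = 0$ $\mu$-almost everywhere, that is, $\lim_{N\to\infty}\mathcal{H}_N f(x)$ exists for $\mu$-almost every $x$. Therefore $\mathcal{C}_r = L^r(\mu)$.

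Finally, put $\mathcal{H}f = \lim_{N}\mathcal{H}_N f$, which is now defined $\mu$-almost everywhere for every $f \in L^r(\mu)$. Since $\lvert \mathcal{H}f \rvert \le \mathcal{H}^* f$ pointwise, Corollary \ref{cor:1} shows that $\mathcal{H}$ is bounded on $L^r(\mu)$. The one step that is not purely formal is the passage from the scales $\{\vartheta^k\}$ to arbitrary $N$, achieved by letting $\vartheta \to 1^+$; this is why one first restricts to bounded $f$, recovering the general $L^r$ statement only afterwards via the maximal inequality together with density.
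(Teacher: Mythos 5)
Your proof is correct and follows essentially the same route as the paper: the oscillation inequality of Proposition \ref{prop:4} (for an arbitrary fixed ratio in $(1,2]$, with $C_J$ uniform in the choice of the sequence $N_j$), the prime number theorem to control $\mathcal{H}_N f-\mathcal{H}_{\vartheta^k}f$ for bounded $f$ across a lacunary gap, and the maximal inequality of Corollary \ref{cor:1} plus density to pass to all of $L^r(\mu)$. The only difference is organizational: the paper runs a single contradiction argument with the ratio chosen as $\tau=1+\epsilon/4$ depending on the hypothetical oscillation $\epsilon$, whereas you first extract a.e.\ convergence along $\{\vartheta^k\}$ for each fixed $\vartheta$ (the ``standard deduction'' you cite is exactly the contradiction argument the paper writes out) and then let $\vartheta\downarrow 1$ through a countable sequence -- the same estimates in a different order of quantifiers.
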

\begin{proof}
	Let $f \in L^2(\mu)$, since the maximal function $\mathcal{H}^*$ is bounded on $L^2(\mu)$
	we may assume $f$ is bounded by $1$. Suppose $\seq{\mathcal{H}_N f}{N \in \NN}$ does not
	converge $\mu$-almost everywhere. Then there is $\epsilon > 0$ such that
	$$
	\mu\big\{x\in X : \limsup_{M, N\to\infty}
		\big\lvert
		\mathcal{H}_N f(x) - \mathcal{H}_{M} f(x)
		\big\rvert
		> 4\epsilon
	\big\} > 4\epsilon.
	$$
	Now one can find a strictly increasing sequence of integers $\seq{k_j}{j\in\NN}$ such that
	for each $j \in \NN$
	$$
	\mu\big\{x\in X : \sup_{N_j \leq N \leq N_{j+1}}
		\big\lvert
		\mathcal{H}_N f(x) - \mathcal{H}_{N_j} f(x)
		\big\rvert
		> \epsilon
	\big\} > \epsilon
	$$
	where $N_j = \tau^{k_j}$ and $\tau=1+\epsilon/4$. If $\tau^k \leq N < \tau^{k+1}$ then setting
	$P_k = \PP \cap (\tau^k, \tau^{k+1}]$ we get
	$$
	\big\lvert
	\mathcal{H}_N f(x) - \mathcal{H}_{\tau^k} f(x)
	\big\rvert
	\leq
	\tau^{-k} \sum_{p \in P_k} \log p.
	$$
	By Siegel--Walfisz theorem we get
    $$
	\sum_{p \in \PP_N} \log p
	=N + \mathcal{O}(N (\log N)^{-1})
	$$
	thus there is $C > 0$ such that
	$$
	\Big\lvert
	\tau^{-k}
	\sum_{p \in P_k} \log p
	-\tau + 1
	\Big\rvert
	\leq
	C k^{-1} (\log \tau)^{-1}.
	$$
	Hence, whenever $k \geq 4 C \epsilon^{-1} (\log \tau)^{-1}$ we have
	$$
	\big\lvert \mathcal{H}_N f(x) - \mathcal{H}_{\tau^k} f (x) \big\rvert \leq \epsilon/2.
	$$
	In particular, we conclude
	$$
	\mu\big\{x\in X: \sup_{\tau^k  \in \Lambda_j}
		\big\lvert \mathcal{H}_{\tau^k } f(x) - \mathcal{H}_{N_j} f(x)\big\rvert > \epsilon/2
	\big\} > \epsilon
	$$
	for each $k_j \geq 4C \epsilon^{-1} (\log \tau)^{-1}$ which contradicts to Proposition
	\ref{prop:4}. Indeed, 
	\begin{align*}
		\epsilon^3
		\lesssim
		\frac{1}{J-J_0}\sum_{j = 0}^J
		\big\lVert
		\sup_{\tau^k  \in \Lambda_j}
		\big\lvert
		\mathcal{H}_{\tau^k } f - \mathcal{H}_{N_j} f
		\big\rvert
		\big\rVert_{L^2(\mu)}^2
		\leq
		\frac{C_J}{J-J_0} \norm{f}_{L^2(\mu)}^2
		\end{align*}
	where $J_0=\min\{j\in\NN: k_j\ge 4C \epsilon^{-1} (\log \tau)^{-1}\}$. Now, the standard
	density argument implies pointwise convergence for each $f\in L^r(\mu)$ where $r>1$, and
	the proof of the theorem is completed.
\end{proof}

\appendix
\section{Boundedness of $\mathcal{M}$}
\label{apx:1}
In the Appendix we discuss why the maximal function
$$
\mathcal{M} f(n) = \sup_{N \in \NN}
\Big\lvert
N^{-1}
\sum_{p \in \pm \PP_N} f(n - p) \log \abs{p}
\Big\rvert
$$
is bounded on $\ell^r(\ZZ)$. This fact was published by Wierdl in \cite{wrl}, however, on page
331 in the last equality for ** the factor $q$ has the power $1$ in place of $p$. Therefore, it is
not sufficient to show an estimate (24) from \cite{wrl} to conclude the proof. In fact, one has to
prove the estimate corresponding to \eqref{eq:16} from the present paper.

For the completeness we provide the sketch of the proof based on the method used in
Section \ref{sec:3}. First, we may restrict supremum to dyadic $N$. We modify the definition of
the multiplier $m_j$ by setting
$$
m_j(\xi) = 2^{-j} \sum_{p \in \pm \PP_N} e^{2\pi i \xi p} \log \abs{p}.
$$
Hence, it suffices to show that for $r > 1$
$$
\big\lVert
\sup_{k \in \NN}
\big\lvert
\mathcal{F}^{-1} \big(m_k \hat{f}\big)
\big\rvert
\big\rVert_{\ell^r}
\lesssim
\norm{f}_{\ell^r}.
$$
Keeping the definition of the major arcs and setting
$$
\Psi_j(\xi) = 2^{-j} \int\limits_{1 \leq \abs{x} \leq 2^j} e^{2\pi i \xi x} dx
$$
Proposition \ref{prop:1} holds true. For proof we use the well-known result that for
$\xi \in \mathfrak{M}_j(a/q) \cap \mathfrak{M}_j$ (see e.g \cite[Lemma 8.3]{nat})
$$
\Big\lvert
m_{2^j}(\xi) - 2^{-j} \frac{\mu(q)}{\varphi(q)} \sum_{1 \leq \abs{n} \leq 2^j} e^{2\pi i \theta n}
\Big\rvert
\lesssim j^{-\alpha}
$$
and then, as in the proof of Proposition \ref{prop:1}, we replace the sum by $\Psi_j$.
Also the demonstration of Proposition \ref{prop:2} has to be modified. There, the estimate
for $\xi \not\in \mathfrak{M}_j$ is a direct application of Vinogradov's theorem. In the proof of
Proposition \ref{prop:3} in the place of \eqref{eq:57} we use $L^r$-boundedness of
Hardy--Littlewood maximal function. Eventually, in the proof of Theorem \ref{th:4} we replace
the sum $\sum_{j=0}^k$ with a single term $m_k$.

\begin{bibliography}{discrete}
	\bibliographystyle{amsplain}
\end{bibliography}

\end{document}